%% LyX 2.0.8.1 created this file.  For more info, see http://www.lyx.org/.
%% Do not edit unless you really know what you are doing.
\documentclass[12pt,english]{article}
\hfuzz2pt
\usepackage{units}
\usepackage{amsthm}
\usepackage{amsmath}
\usepackage{amssymb}
\usepackage{graphicx}
\usepackage{esint}

\makeatletter
%%%%%%%%%%%%%%%%%%%%%%%%%%%%%% Textclass specific LaTeX commands.
\numberwithin{equation}{section}
\numberwithin{figure}{section}
\theoremstyle{plain}
\newtheorem{thm}{\protect\theoremname}
  \theoremstyle{plain}
  \newtheorem{lem}[thm]{\protect\lemmaname}
  \theoremstyle{plain}
  \newtheorem{prop}[thm]{\protect\propositionname}
  \theoremstyle{remark}
  \newtheorem{rem}[thm]{\protect\remarkname}
  \theoremstyle{definition}
  
    \theoremstyle{plain}
   \newtheorem{corollary}[thm]{\protect\corollaryname}

\makeatother

\newcommand{\R}{{\mathbb{R}}}
\newcommand{\Z}{{\mathbb{Z}}}
\newcommand{\eps}{{\varepsilon}}
\usepackage{babel}
  \providecommand{\definitionname}{Definition}
  \providecommand{\lemmaname}{Lemma}
  \providecommand{\propositionname}{Proposition}
  \providecommand{\remarkname}{Remark}
\providecommand{\theoremname}{Theorem}
\providecommand{\corollaryname}{Corollary}
\begin{document}

\title{Convergence of numerical approximations to non-linear continuity equations with rough force fields}

\date{}

\author{F. Ben Belgacem \footnote{Laboratory of  partial differential equations (LR03ES04), Faculty of Sciences of Tunis, University of Tunis, El Manar, TUNISIA. 
Email:  belgacem.fethi@gmail.com, or fethi.benbelgacem@fst.rnu.tn },  
P-E Jabin \footnote{CSCAMM and Dept. of Mathematics, University of Maryland,
College Park, MD 20742, USA. Email: pjabin@umd.edu. P.--E.~{\sc Jabin} is partially supported by NSF Grant DMS 1312142 and 1614537 and by NSF Grant RNMS (Ki-Net) 1107444.}
 }
\maketitle
\begin{abstract}
We prove quantitative regularity estimates for the solutions to non-linear continuity equations and their discretized numerical approximations on Cartesian grids when advected by a rough force field. This allow us to recover the known optimal regularity for linear transport equations but also to obtain the convergence of a wide range of numerical schemes. Our proof is based on a novel commutator estimates, quantifying and extending to the non-linear case the classical commutator approach of the theory of renormalized solutions. 
\end{abstract}
%%%%%%%%%%%%%%%%%%%%%%%%%%%%%%%%%%%%%%%%%%%%%%%
\section{Introduction}
%%%%%%%%%%%%%%%%%%%%%%%%%%%%%%%%%%%%%%%%%%%%%%%
\subsection{The model}
%%%%%%%%%%%%%%%%%%%%%%%%%%%%%%%%%%%%%%%%%%%%%%%%%
One of the main goals of this article is to study the convergence of some simple numerical schemes for solution of the non-linear  equation
\begin{equation}
\partial_{t}u\left(t,x\right)+\textrm{div}\left(a\left(t,x\right)f\left(u\left(t,x\right)\right)\right)=0,\quad t\in\mathbb{R}_{+},\ x\in\mathbb{R}^{d},\label{eq:principale}
\end{equation}
in the case where the velocity field $a$ only belongs to $L^p_{loc} (\R_+,\ W^{1,p}(\R^d))$ and is hence not smooth. 

The density $u$ can model a large variety density of agents or objects from molecules to micro-organisms and individuals (in pedestrian models for instance). Eq. \eqref{eq:principale} combines a classical advection through the velocity $a$ with non-linear effects through the flux $f\in W^{1,\infty}\left(\mathbb{R},\mathbb{R}\right)$: It is hence a hybrid between a linear advection equation and a scalar conservation law.

A good example for $f$ is $f(u)=u\,(u_c-u)_+$ where $u_c$ is a critical density. Eq. \eqref{eq:principale} then ensures that $u\leq u_c$ at all times. This is an important feature when relatively large agents are considered in comparison to the length scale over which one calculates the density. In such a case, the maximal density of agents (where they all touch each other) may be of the same order of magnitude as the average density under consideration. This is usually the case for crowd motion models. We refer to \cite{Ma, MaRoSa} for examples of such congestion effects.

We will not consider any particular coupling between $a$ and $u$ in this article. Since we do not study uniqueness, we only need to assume that some Sobolev regularity is obtained on $a$. This makes our estimates compatible with a wide range of models. We give two examples; first coupling through the Poisson equation
\[
a=\nabla c,\quad -\Delta c=g(u).
\]
This is commonly used for so-called chemotaxis models, the dynamics of micro-organisms directed by chemical signals. In this case one considers only one chemical whose density is given by $c(t,x)$; the micro-organisms try to follow the gradient of the chemical. Such a model has been studied in \cite{DalPer}.

A variant of this is the Hamilton-Jacobi coupling
\[
a=\nabla c,\quad -\Delta c+|\nabla c|^2=g(u),
\]
which has been implemented for pedestrian models as in \cite{DiMaPi}. 

Eq. \eqref{eq:principale} includes, as a special case for $f=Id$, the classical continuity equation
\begin{equation}
\partial_{t}u\left(t,x\right)+\textrm{div}\left(a\left(t,x\right) \,u\left(t,x\right)\right)=0.\label{eq:continuity}
\end{equation}
The results presented here hence also apply to the case of \eqref{eq:continuity}. The non-linearity in \eqref{eq:principale} restricts many of the techniques that are available for \eqref{eq:continuity} which is one of the recurring difficulties in this article.

For simplicity, we call the general \eqref{eq:principale} the non-linear continuity equation and \eqref{eq:continuity} the linear continuity equation. This emphasizes their main structural difference but of course in most applications both  \eqref{eq:principale}  and  \eqref{eq:continuity} are part of a larger non-linear system which couples $a$ and $u$. %Thus even though \eqref{eq:continuity} is linear in $u$ for a given $a$, 
%%%%%%%%%%%%%%%%%%%%%%%%%%%%%%%%%%%%%%%%%%%%%%%%%%%%%%%%%%%%%%%%
\subsection{An example of application: Compactness and explicit regularity estimates for Eq. \eqref{eq:principale}}
%%%%%%%%%%%%%%%%%%%%%%%%%%%%%%%%%%%%%5
The key difficulties in many of such complex, nonlinear models are possible instabilities in the density $\rho$: The main challenge is to {\em control how oscillations in $\rho$ can develop} in \eqref{eq:principale}, especially for rough velocity field  such as $u\in L^2_t H^2_x$ given by typical viscosity bounds. This makes the propagation of regularity on \eqref{eq:principale} at the center of our proposed work for convection models. 

Unfortunately, it is in general not possible to propagate any kind of Sobolev regularity on $\rho$, often leading to implicit or convoluted argument.

As a first illustration of the method introduced here, we present new explicit regularity estimates for solutions to \eqref{eq:principale}. Define the semi-norms for $0<\theta<1$
\[
\left\Vert u\right\Vert _{p,\theta}^{p}=\sup_{h\leq 1/2} |\log h|^{-\theta}\,\int_{\mathbb{R}^{2d}}{\displaystyle \frac{\mathbb{I}_{\left|x-y\right|\leq1}}{\left(\left|x-y\right|+h\right)^{d}}\left|u\left(t,x\right)-u\left(t,y\right)\right|^{p}dxdy.}
\]
Obviously the semi-norms are decreasing in $\theta$
\[
\|u\|_{p,\theta}\leq \|u\|_{p,\theta'}, \quad\mbox{if}\ \theta'\leq \theta,
\]
and they are only semi-norms as $\|u\|_{p,\theta}$ vanishes if $u$ is a constant. We may define the corresponding spaces
\[
W_{\log,\theta}^p=\{u\in L^p\;|\,\|u\|_{p,\theta}<\infty\}.
\]
Those semi-norms measure intermediary regularity ($log$ of a derivative) between $L^p$ spaces and Sobolev spaces $W^{s,p}$ as per the proposition
\begin{prop}
For any $s>0$, $0<\theta<1$, and any $1\leq p\leq \infty$, one has the embeddings $W^{s,p}\subset W_{\log,\theta}^p\subset L^p$ which are compact on any smooth bounded domain of $\R^d$. For $\theta=0$, $\|u\|_{p,0}\leq C\,\|u\|_{L^p}$. In addition for $p=2$,
\[\begin{split}
\|u\|_{2,\theta}^2+\|u\|_{L^2}^2&\sim \sup_h \int_{\R^d} \frac{\left|\log \left(\frac{1}{|\xi|}+h\right)\right|+1}{|\log h|^\theta}\,|{\cal F}\,u(\xi)|^2\,d\xi\\
&\leq \int_{\R^d} (\log (1+|\xi|))^{1-\theta}\,|{\cal F}\,u(\xi)|^2\,d\xi,
\end{split}\]
where ${\cal F}\,u$ denotes the Fourier transform of $u$. \label{propembedding}
\end{prop}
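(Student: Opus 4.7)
The plan is to treat the three assertions in sequence, with rather direct methods throughout.

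For the chain $W^{s,p}\subset W_{\log,\theta}^{p}\subset L^{p}$ together with the bound $\|u\|_{p,0}\leq C\|u\|_{L^{p}}$: the second inclusion is built into the definition of $W_{\log,\theta}^{p}$. For the first embedding I would split the integral defining $\|u\|_{p,\theta}^{p}$ at the scale $|x-y|=h$. On $\{|x-y|\leq h\}$ I bound $(|x-y|+h)^{-d}\leq h^{-d}$ and rewrite $|u(x)-u(y)|^{p}=|x-y|^{d+sp}G(x,y)$ with $G$ the Gagliardo integrand of $W^{s,p}$, producing a factor $h^{sp}$; on $\{h<|x-y|\leq 1\}$ I bound $(|x-y|+h)^{-d}\leq|x-y|^{-d}$ and absorb $|x-y|^{sp}\leq 1$ into $G$. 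Since $|\log h|^{-\theta}\leq 1$ for $h\leq 1/2$, these pieces combine into $\|u\|_{p,\theta}^{p}\leq C\|u\|_{W^{s,p}}^{p}$. The $L^{p}$ bound comes from the triangular estimate $|u(x)-u(y)|^{p}\leq 2^{p-1}(|u(x)|^{p}+|u(y)|^{p})$ combined with the explicit computation $\int_{|z|\leq 1}(|z|+h)^{-d}dz\leq C|\log h|$, whose logarithmic blow-up is exactly what the normalization $|\log h|^{-\theta}$ is designed to absorb. Compactness on a smooth bounded domain then follows from the Fr\'echet--Kolmogorov criterion: finite $\|u\|_{p,\theta}$ provides an averaged $L^{p}$ modulus of continuity, hence equicontinuity in $L^{p}$ on bounded sets, while the splitting above supplies the matching quantitative bound for $W^{s,p}\subset W_{\log,\theta}^{p}$.

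For the Fourier identity when $p=2$: since $K_{h}(z)=\mathbb{I}_{|z|\leq 1}(|z|+h)^{-d}$ is even and real, Plancherel gives
\[
\int |u(x)-u(y)|^{2}K_{h}(x-y)\,dx\,dy=2\int_{\R^{d}}B_{h}(\xi)\,|{\cal F}u(\xi)|^{2}d\xi,\qquad B_{h}(\xi)=\int_{|z|\leq 1}\frac{1-\cos(\xi\cdot z)}{(|z|+h)^{d}}dz,
\]
and the claim reduces to a two-sided estimate of the symbol $B_{h}$. On $|z|\leq\min(1,1/|\xi|)$ the Taylor bound $1-\cos(\xi\cdot z)\leq|\xi|^{2}|z|^{2}/2$ yields an $O(1)$ contribution uniformly in $h$; on the complementary region $1/|\xi|<|z|\leq 1$ (non-empty only for $|\xi|\geq 1$) the bound $1-\cos\leq 2$ together with a matching lower bound on the angular average of $1-\cos(\xi\cdot z)$ gives $B_{h}(\xi)\sim 1+\min(\log|\xi|,|\log h|)$. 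The identity $|\log(1/|\xi|+h)|\sim\min(\log|\xi|,|\log h|)$ (valid on the range where the symbol is non-negligible) then matches this with the weight $(|\log(1/|\xi|+h)|+1)/|\log h|^{\theta}$ after adding $\int|{\cal F}u|^{2}$ to account for the low-frequency region unseen by $B_h$. The last inequality in the statement is obtained by optimizing the choice $h\sim 1/|\xi|$ in the supremum, which produces the weight $(\log(1+|\xi|))^{1-\theta}$.

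The main technical obstacle is the lower bound on $B_{h}(\xi)$ needed for the two-sided equivalence: the upper bound only requires triangular estimates, but the lower bound must rule out destructive interference from the oscillations of $\cos(\xi\cdot z)$ on the outer region. This amounts to showing that the angular mean of $\cos(\xi\cdot z)$ on the sphere of radius $r$ is bounded away from $1$ as soon as $|\xi|r$ is bounded below, a classical Bessel-function asymptotic; this is what forces the logarithmic scale in the statement to be sharp and is precisely what the normalization $|\log h|^{-\theta}$ is calibrated against.
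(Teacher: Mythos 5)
Your plan is sound and, for two of the three parts, runs parallel to the paper's own proof. For the embedding $W^{s,p}\subset W^p_{\log,\theta}$ the paper integrates the translation estimate $\|u(\cdot+z)-u\|_{L^p}\leq C|z|^s\|u\|_{W^{s,p}}$ against $K_h$ and uses $\int |z|^s K_h(z)\,dz\leq C$ uniformly in $h$; your splitting of the Gagliardo integrand at $|x-y|=h$ is an equivalent computation and works (for $s\geq 1$ reduce first to some $s'<1$). For $p=2$ the paper performs exactly your Plancherel reduction and evaluates the symbol as $\int_{|z\cdot\xi|\geq 1}K_h+\int_{|z\cdot\xi|\leq1}K_h|z\cdot\xi|^2\sim|\log(1/|\xi|+h)|+1$; your treatment of the lower bound (averaging the oscillation of $\cos(\xi\cdot z)$ over shells) is in fact more careful than the paper, which asserts the two-sided bound without detail, and your remark about the low-frequency region being covered by the added $\|u\|_{L^2}^2$ is the right reading of the statement. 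One caveat on the $L^p$ bound: since $\int K_h\sim|\log h|$, the triangular estimate gives $|\log h|^{-\theta}\int K_h|u(x)-u(y)|^p\lesssim|\log h|^{1-\theta}\|u\|_{L^p}^p$, which is uniform in $h$ only for $\theta\geq1$. So your computation (like the paper's own proof) establishes $\|u\|_{p,1}\leq C\|u\|_{L^p}$, not the ``$\theta=0$'' claim as literally written --- which in fact cannot hold, since by monotonicity of the seminorms it would force $W^p_{\log,\theta}=L^p$ for every $\theta$; the normalization $|\log h|^{-\theta}$ does \emph{not} absorb the logarithm when $\theta<1$, so do not present it as if it did.

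The one genuine gap is the compactness step. A uniform bound on the averaged quantity $|\log h|^{-\theta}\int K_h(z)\,\|u(\cdot+z)-u\|_{L^p}^p\,dz$ does \emph{not} imply translation equicontinuity $\sup_n\sup_{|z|\leq\delta}\|u_n(\cdot+z)-u_n\|_{L^p}\to0$, which is what the Fr\'echet--Kolmogorov criterion in its usual form requires: restricted to a single shell $|z|\sim\delta$ the hypothesis only yields $\delta^{-d}\int_{|z|\leq\delta}\|u(\cdot+z)-u\|_{L^p}^p\,dz\lesssim|\log\delta|^{\theta}$, which does not even tend to $0$. The smallness is distributed over the $\sim|\log h|$ dyadic scales between $h$ and $1$, and the way to extract it is the paper's mollifier comparison: with $\bar K_h=K_h/\|K_h\|_{L^1}$, Jensen's inequality gives $\|u-\bar K_h\star u\|_{L^p}\leq C\,|\log h|^{(\theta-1)/p}\,\|u\|_{p,\theta}\to0$ as $h\to0$, \emph{precisely because} $\theta<1$; for each fixed $h$ the mollified sequence is bounded in $W^{1,p}_{loc}$ and hence compact by Rellich, and compactness of $u_n$ follows. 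Your sketch contains all the needed ingredients, but the inference ``averaged modulus of continuity, hence equicontinuity'' is false as stated and should be replaced by this uniform approximation by mollifications.
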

There has recently been an increase in the interest for such spaces which differ from classical Sobolev or $L^p$ spaces by a $\log$ scale; see for instance \cite{BoBrMi}.

The semi-norms are at the critical scale where regularity is propagated for Eq. \eqref{eq:principale} with 
\begin{thm}
Assume that $a$ belongs to the Besov space $L^1([0,\ T],\;B^1_{p,q}(\R^d))$ for some $p,\,q\geq 1$ with $\mbox{div}\,a\in L^\infty([0,\ T]\times\R^d)$. Any entropy solution $u\in L^\infty([0,\ T],\;L^{p^*}\cap L^{p^*,1})$ satisfies the regularity estimate for any $t\leq T$ and any
$\theta\geq \max(1/p^*,1-1/q)$
\[\begin{split}
\|u\|_{1,\theta}&\leq e^{C\, \|div\,a\|_{L^\infty}\,\|f'\|_{L^\infty} t}\,\Big(\|f'\|_{L^\infty}\, \|u\|_{L^\infty_t L^{p^*}_x}\,\int_0^t\|\mbox{div}\,a(s,.)\|_{p,p\,(\theta-1/p^*)}\,ds\\
&+C\,\|\nabla a\|_{L^1([0,\ T],\;B^0_{p,q}(\R^d))}\,\|f'\|_{L^\infty}\,\|u\|_{L^\infty_t L^{p^*,1}_x}+\|u^0\|_{1,\theta}\Big).
\end{split}
\]
This implies the simple estimate for $a\in L^1([0,\ T],\;W^{1,p}(\R^d))$ with $1<p\leq 2$ and $u\in L^\infty([0,\ T],\;L^1\cap L^r(\R^d))$ for $r>p^*$
\[\begin{split}
\|u\|_{1,1/2}&\leq e^{C\, \|div\,a\|_{L^\infty}\,\|f'\|_{L^\infty} t}\,\Big(\|f'\|_{L^\infty}\, \|u\|_{L^\infty_t L^{p^*}_x}\,\int_0^t\|\mbox{div}\,a(s,.)\|_{p,1-p/2}\,ds\\
&+C\,\|\nabla a\|_{L^1([0,\ T],\;L^p(\R^d))}\,\|f'\|_{L^\infty}\,\|u\|_{L^\infty_t L^1\cap L^{r}_x}+\|u^0\|_{1,1/2}\Big).
\end{split}\]
 \label{thregularity}
\end{thm}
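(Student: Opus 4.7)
The plan is to set up a quantitative energy estimate for
\[
Q_{h}(t):=\int_{\R^{2d}}K_{h}(x-y)\,|u(t,x)-u(t,y)|\,dx\,dy,\qquad K_{h}(z):=\frac{\mathbb{I}_{|z|\leq 1}}{(|z|+h)^{d}},
\]
derive a differential inequality for $Q_{h}$, and conclude by Gronwall after dividing by $|\log h|^{\theta}$ and taking the supremum over $h\leq 1/2$. The starting point is the Kruzkov entropy inequality $\partial_{t}|u-k|+\mbox{div}(a\,\mbox{sgn}(u-k)(f(u)-f(k)))+\mbox{sgn}(u-k)f(k)\,\mbox{div}\,a\leq 0$. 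Freezing $k=u(t,y)$ in the $x$-equation and $k=u(t,x)$ in the $y$-equation (Kruzkov's doubling of variables), summing, multiplying by $K_{h}(x-y)$ and integrating by parts once using $\nabla_{x}K_{h}(x-y)=-\nabla_{y}K_{h}(x-y)$ one reaches
\[
2\,\tfrac{d}{dt}Q_{h}(t)\leq \mathcal{C}_{h}(t)+\mathcal{D}_{h}^{\mathrm{sym}}(t)+\mathcal{D}_{h}^{\mathrm{as}}(t),
\]
where $\mathcal{C}_{h}$ is the \emph{commutator} $\int g(u(x),u(y))\,(a(x)-a(y))\cdot\nabla K_{h}(x-y)\,dxdy$ with $g(r,s):=\mbox{sgn}(r-s)(f(r)-f(s))$, while $\mathcal{D}_{h}^{\mathrm{sym}}$ and $\mathcal{D}_{h}^{\mathrm{as}}$ arise from symmetrizing the $\mbox{div}\,a$ corrections into a piece proportional to $\mbox{div}\,a(x)+\mbox{div}\,a(y)$ and one proportional to $\mbox{div}\,a(x)-\mbox{div}\,a(y)$.

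For the commutator I would use $|g|\leq \|f'\|_{L^{\infty}}|u(x)-u(y)|$ and $|\nabla K_{h}(z)|\leq C(|z|+h)^{-d-1}\mathbb{I}_{|z|\leq 1}$ (the surface contribution from $|z|=1$ being uniformly harmless), then pass to $z=x-y$ and pair $|u(\cdot+z)-u(\cdot)|$ against $|a(\cdot+z)-a(\cdot)|$ via Hölder combined with Lorentz interpolation and the sharp Sobolev--Lorentz embedding --- this is where $\|u\|_{L^{p^{*},1}}$ enters, since $L^{p^{*}}$ is not directly Hölder-conjugate to $L^{p}$ and the distribution of integrability across the scales $|z|\in[h,1]$ must be arranged carefully. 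The problem is reduced to a single radial integral of the type
\[
\int_{0}^{1}\frac{\omega_{p}(a,r)}{r}\cdot\frac{r^{d}}{(r+h)^{d+1}}\,dr,
\]
and one final Hölder in $r$ with conjugate exponents $(q,q')$ against the Besov semi-norm $\|a\|_{B^{1}_{p,q}}=\bigl(\int(\omega_{p}(a,r)/r)^{q}dr/r\bigr)^{1/q}$ turns the remaining scale integral $\int_{0}^{1}(r/(r+h))^{q'(d+1)}\,dr/r$ into $|\log h|^{1/q'}=|\log h|^{1-1/q}$, which forces the lower bound $\theta\geq 1-1/q$.

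The symmetric divergence $\mathcal{D}_{h}^{\mathrm{sym}}$ is directly bounded by $C\|f'\|_{L^{\infty}}\|\mbox{div}\,a\|_{L^{\infty}}Q_{h}(t)$ and produces the Gronwall factor $e^{C\|\mathrm{div}\,a\|_{L^{\infty}}\|f'\|_{L^{\infty}}t}$. The antisymmetric one is handled by the Cauchy--Schwarz-style splitting $K_{h}=K_{h}^{1/p^{*}}\cdot K_{h}^{1/p}$ followed by Hölder with exponents $(p^{*},p)$:
\[
|\mathcal{D}_{h}^{\mathrm{as}}|\leq C\|f'\|_{L^{\infty}}\|u\|_{L^{p^{*}}}\Bigl(\int K_{h}(z)dz\Bigr)^{1/p^{*}}\Bigl(\int K_{h}(x-y)|\mbox{div}\,a(x)-\mbox{div}\,a(y)|^{p}\,dxdy\Bigr)^{1/p},
\]
and since $\int K_{h}\sim|\log h|$ and the last double integral is at most $\|\mbox{div}\,a\|_{p,p(\theta-1/p^{*})}^{p}|\log h|^{p(\theta-1/p^{*})}$, exactly the total weight $|\log h|^{\theta}$ is recovered, with the requirement $\theta\geq 1/p^{*}$. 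Assembling the three pieces into $\frac{d}{dt}Q_{h}\leq C\|f'\|_{L^{\infty}}\|\mbox{div}\,a\|_{L^{\infty}}Q_{h}+\Psi(t)|\log h|^{\theta}$ and applying Gronwall yields the first estimate. The second, simpler estimate is obtained by specializing to $a\in W^{1,p}$ via the embedding $W^{1,p}\hookrightarrow B^{1}_{p,2}$ for $1<p\leq 2$ (hence $q=2$, $\theta=1/2=1-1/q$) together with the Lorentz-interpolation embedding $L^{1}\cap L^{r}\hookrightarrow L^{p^{*},1}$ for $r>p^{*}$.

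The main obstacle is the simultaneous bookkeeping of the two distinct logarithmic scales appearing in the commutator estimate: one from the kernel $K_{h}$, which probes all dyadic scales $|z|\in[h,1]$, and the other from the Besov summation of the modulus of continuity of $a$ across those same scales. The sharp exponent $1-1/q$ is recovered only by performing the two logarithmic Hölder integrations in the correct order and against the correct Lorentz refinement $L^{p^{*},1}$ of $u$; this is precisely what distinguishes the present quantitative commutator from the classical DiPerna--Lions/Ambrosio one and makes the strategy work in the non-linear setting. A secondary, more standard technical point is the rigorous justification of the above formal manipulation on entropy solutions when $a$ has only Besov regularity, which must be carried out via a complete doubling-of-variables argument à la Kruzkov.
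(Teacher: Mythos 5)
Your overall architecture --- Kruzkov doubling of variables, testing against $K_h$, splitting the right-hand side into a commutator plus a symmetric and an antisymmetric divergence term, bounding the symmetric one by $\|f'\|_{L^\infty}\|\mbox{div}\,a\|_{L^\infty}Q_h$ to feed Gronwall and the antisymmetric one by H\"older with exponents $(p,p^*)$ to produce $\|\mbox{div}\,a\|_{p,p(\theta-1/p^*)}$, then concluding with the embeddings $W^{1,p}\subset B^1_{p,2}$ and $L^1\cap L^r\subset L^{p^*,1}$ --- coincides with the paper's proof and is correct. The gap is in the commutator, which is the heart of the theorem. By taking absolute values, bounding $|F|\leq\|f'\|_{L^\infty}|u(x)-u(y)|$ and pairing against the first-order modulus of continuity $\omega_p(a,r)=\sup_{|z|\leq r}\|a(\cdot+z)-a(\cdot)\|_{L^p}$, you destroy the cancellation that the estimate relies on. First differences characterize $B^s_{p,q}$ only for $s<1$: for any non-constant $a$ (even $a\in C^\infty_c$) one has $\omega_p(a,r)/r\to c>0$ as $r\to 0$, so the quantity $\bigl(\int_0^1(\omega_p(a,r)/r)^q\,dr/r\bigr)^{1/q}$ that you call the $B^1_{p,q}$ semi-norm is $+\infty$, and your final H\"older in $r$ is vacuous. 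The honest output of your crude bound is $\|\nabla a\|_{L^p}\int_h^1 dr/r=\|\nabla a\|_{L^p}\,|\log h|$, a \emph{full} logarithm; after dividing by $|\log h|^\theta$ with $\theta<1$ this diverges as $h\to0$ and the Gronwall argument cannot close (at $\theta=1$ the semi-norm carries no information, as noted in the proof of Proposition \ref{propembedding}). The entire point of Proposition \ref{commutatorestimate} is to improve $|\log h|$ to $|\log h|^{1-1/q}$, and this requires keeping signs: the paper represents $a(x)-a(y)$ as an average of $\nabla a$ over curves (Lemma \ref{lem:psi}), subtracts the angular means to produce mean-zero kernels, isolates the divergence contribution $J_2$ (which has no cancellation and must be absorbed into the Gronwall term via $\|\mbox{div}\,a\|_{L^\infty}$), and invokes Lemma \ref{lem:convolution}, where the mean-zero property combined with a Littlewood--Paley decomposition yields $|\log h|^{1-1/q}\,\|\nabla a\|_{B^0_{p,q}}$.

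A second, related misconception concerns the Lorentz norm. It does not arise from any H\"older--Lorentz pairing of $u$ against $a$ (indeed $L^{p^*}$ \emph{is} the H\"older conjugate of $L^p$). It arises because the cancellation argument above requires the quadratic structure $|g(x)-g(y)|^2$ --- for instance the term $J_x$ involving $g^2(x)$ vanishes only because it is paired with a mean-zero kernel in the $w$ variable, and $J_y$, $J_{xy}$ are identified as convolutions of $g^2$ and $g$ --- so one cannot apply it directly to $F(u(x),u(y))$. The paper's Lemma \ref{lemcommutator} therefore passes through the level-set representation
\begin{equation*}
F(u(x),u(y))=\int_0^\infty f'(\xi)\,\bigl|\kappa(x,\xi)-\kappa(y,\xi)\bigr|^2\,d\xi,\qquad \kappa(t,x,\xi)=\mathbb{I}_{0\leq\xi\leq u(t,x)},
\end{equation*}
applies Proposition \ref{commutatorestimate} with $g=\kappa(\cdot,\xi)$ for each level $\xi$, and integrates $\|\kappa(\cdot,\xi)\|_{L^{2p^*}}^2=|\{u\geq\xi\}|^{1/p^*}$ in $\xi$, which is exactly $\|u\|_{L^{p^*,1}}$. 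This level-set decomposition is what makes the non-linear flux $f$ tractable, and it is absent from your proposal.
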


For technical reason it is often more convenient to work with a smooth kernel in the definition of the semi-norms. Define
\begin{equation}
K_{h}\left(x-y\right)={\displaystyle \frac{\phi(x-y)}{\left(\left|x-y\right|+h\right)^{d}}},\label{defKh}
\end{equation}
for some smooth function $\phi$ with compact support in $B(0,2)$ and such that
$\phi=1$ inside $B(0,1)$. We can then take the variant definition
\[
\|u\|_{p,\theta}^p=\sup_{h\leq 1/2} |\log h|^{-\theta}\,\int_{\mathbb{R}^{2d}}K_{h}\left(x-y\right)\left|u\left(x\right) -u\left(y\right)\right|^{p}\textrm{d}x\,\textrm{d}y.
\]

A first rougher version of Th. \ref{thregularity} had been derived in \cite{BeJa}. The main estimate in the proof however was $L^2$ based, leading to non optimal estimates where $a\in W^{1,p}_x$ with $p\neq 2$. It was moreover essentially non compatible with a discretized setting such as the numerical schemes that we are mostly concerned with here. We have completely revisited the approach by identifying precisely the cancellations at the heart of Th. \ref{thregularity}. This lets us obtain the optimal regularity in a much more general setting and identify the critical Besov spaces for $a$.

Quantitative regularity estimates were first obtained for linear advection or continuity equations in \cite{CD}. The method there is based on bounds along the characteristics and very different from the one followed here. This characteristics method was later used in \cite{BBC, BoCr, CJ, CCS, JabMas} under various extended assumptions (singular integrals or force field with less than a derivative but with the right structure).

A more similar looking estimate has been obtained in \cite{BreJab} also at the PDE level. This last estimate relies on a duality method which is only compatible with linear continuity equations but can then be more carefully tailored to the problem. 

All those explicit estimates propagate some form of a $\log$ of a derivative, just like Th. \ref{thregularity}. In general this is the best that one can hope for in the presence of a Sobolev force field as was proved in \cite{ACM,Jab2}.

We further explain the connections between the present quantitative estimates and the classical theory of renormalized solutions for linear continuity equations when we state our commutator estimate in Section \ref{sec:commutator}. 
%%%%%%%%%%%%%%%%%%%%%%%%%%%%%%%%%%%%%%%%%%%%%%%%%%%%%%
\section{The Numerical Scheme and main results}
%%%%%%%%%%%%%%%%%%%%%%%%%%%%%%%%%%%%%%%%%%%%%%%%%%%%%%
We now turn to the main results of this article concerning the convergence of numerical schemes for Eq. \eqref{eq:principale}. Numerical schemes for advection equations with rough force fields have mostly been studied in the context of compressible fluid dynamics where the density satisfies the continuity equation with an only $H^1$ velocity field. We refer in particular to schemes for the compressible Stokes system with for instance \cite{EGHL,EGHL2, GHL}, or the Navier-Stokes system with \cite{GGHL, GHL2}.

Compressible Fluid dynamics models typically involve the linear continuity equation \eqref{eq:continuity} on the density. One of the major difficulties in proving the convergence of such numerical schemes is to obtain the compactness of the density. The convergence of schemes for the compressible Navier-Stokes system is in large part still an open question. We hope that the new quantitative estimates that we introduce can prove useful.
 
In addition to the linear continuity equation \eqref{eq:continuity}, Eq. \eqref{eq:principale} also contains the classical one-dimensional scalar conservation law
\begin{equation}
\partial_t u+\partial_x f(u)=0.\label{SCL}
\end{equation}
The well posedness theory for \eqref{SCL} is now well understood since the work of Kruzkov \cite{Kru}. The analysis of numerical schemes for conservation laws of which \eqref{SCL} is a very simple case is also classical and well-developed, we refer for example to \cite{Lev, Tad}. 

Eq. \eqref{SCL} exhibits shocks in finite time so that it only propagates up to $BV$ regularity or in general $W^{s,1}$ with $s<1$. One of the challenges of our study was to find regularity estimates which are compatible both with linear advection equations with rough force fields and with shocks from conservation laws.

In general our non-linear continuity equation could be seen as a conservation law with time and space dependent fluxes. Although there are some results for such systems with discontinuous fluxes, see e.g. \cite{AuPe, KaTo, Panov}, there do not seem to be applicable in a case such as ours where only Sobolev bounds are known in the absence of any other strong structure on the flux.

Before describing more in details the schemes that we consider, we want to emphasize here that we focus on schemes {\em on a Cartesian grid}. Non-Cartesian grids can be much more complicated from the point of view of the regularity as even for smooth velocity fields the discrete solutions may lose regularity (and not be in $BV$ for instance).
%%%%%%%%%%%%%%%%%%%%%%%%%%%%%%%%%%
\subsection{Description of the schemes under consideration}
%%%%%%%%%%%%%%%%%%%%%%%%%%%%%%%%%%%
The discrete solution is given by a set of values $u_i^n$ representing an approximation of the continuous solution at time $t_n=n\,\delta t$ over the various points of the grid at $i=(i_1,\ldots,i_d)$ with $i_1\dots i_d=1\dots N$. We assume a grid length equal to $\delta x$ and denote by $x_i$ the points at the center of each mesh. 

We will make abundant use of the discrete $l^p$ norms which we normalize by the grid size
\[
\|u^n\|_{l^p}^p=\delta x^d\,\sum_{i\in \Z^d} |u^n_i|^p.
\]
For convenience we denote $i+[\tau]_k$, for $k=1\dots d$, the index where coordinate $i_k$ of $i$ is shifted by $\tau$. So for example $x_{i+[1]_k}$ is simply the center of the next mesh in direction $k$. %Similarly $x_{i+[1/2]_k}$ denotes a point on the boundary between two meshes.

This lets us easily define discrete Sobolev norms per
\[
\|a^n\|_{d,W^{1,p}}^p=\delta x^d\,\sum_{i\in \Z^d} \sum_{k=1}^d |a^n_i-a^n_{i+[1]_k}|^p.
\]
%
%We shall also make use of the notation $[i,\,m]_k$ to represent the index $(i_1,\ldots,i_{k-1},m,i_{k+1},\ldots i_d)$ where the $k-th$ coordinate of $i$ is replaced by $m$. 
%
We consider fairly general explicit schemes of the form
\begin{equation}
u_{i}^{n+1}=\sum_{m\in \Z^d} b_{i,m}(a^n_m,\;u_m^n),\label{eq:sheme0}
\end{equation}
where the $b_{i,m}$ are functions normalized so that $b(a,0)=0$. The non-linear dependence on the velocity field $a$ can for instance follow from upwinding. The velocity field itself is discretized so that for any $n$ and $i$, $a^n_m$ is a vector of $\R^d$~: $a^n_m=(a^n_{m,1},\ldots,a^n_{m,d})$.

We do not explicitly distinguish the boundary conditions in the scheme. And we a priori allow to work on an unbounded grid, hence the fact that $m$ is summed over all $\Z^d$ in \eqref{eq:sheme0}. But of course in most practical settings the grid is truncated, meaning that $a^n_i$ and $u^n_i$ have compact support in $i$. 

We also point out that the function $b_{i,m}$ could be chosen differently from one time step to another, {\em i.e.} $b_{i,m}=b^n_{i,m}$, without any difference in the proofs. In order to avoid additional indices in the notation as much as possible, we  will just write $b_{i,m}$.  

The {\em key assumption} on the $b_{i,m}$ is that $b_{i,m}(a,u)$ is increasing in $u$. This ensures that the scheme is monotone and entropic. For simplicity we also assume that $b_{i,i}(a,u)-u/2$ is increasing in $u$, which can always be insured by choosing the appropriate CFL condition.  

Most explicit schemes require a CFL condition to be monotone and this in turns typically demands a uniform bound on the velocity field
\[
\sup_{i,n} |a_i^n|<\infty.
\]
We do not use directly such a bound but again it is likely to be indirectly imposed through the previous monotonicity condition.

We are asking that the scheme be conservative, which means that $b_{i,m}$ can be expressed as a difference of two fluxes $F^k$ in each direction $k=1\dots d$
\begin{equation}
b_{i,m}(a_m^n,u_m^n)=u_m^n\,\delta_{i=m}+\frac{\delta t}{\delta x}\,\sum_{k=1}^d \left(F^k_{i+[1]_k-m}(a_m^n,u_m^n)-F^k_{i-m}(a_m^n,u_m^n)\right).\label{conservative}
\end{equation}
The fluxes $F_j$ are obviously defined up to a constant function and we normalize them so that for any $a\in \R^d$, $u\in\R_+$
\begin{equation}
\sum_j F^k_j(a,u)=a^k\,f(u),\label{normflux}
\end{equation}
where $a^k$ is the $k$ coordinate of the vector $a$. 

The conservative form of the scheme implies for instance the conservation of mass
\begin{equation}
\sum_{i,\,m \in \Z^d} b_{i,m}(a^n_m,\;u_m^n)=\sum_{i\in\Z^d} u_i^n.\label{conservemass}
\end{equation}
In general we do not ask that the divergence of $a$ be exactly discretized by the scheme. Instead we impose the following condition: There exists a Lipschitz function $\tilde f$ and uniformly bounded $D^n_i$ s.t. for any constant $U$
\begin{equation}
\sum_{j\in \Z^d} b_{i,j}(a^n_j,U)=U+\delta t\,D^n_i\,\tilde f(U).\label{divcondition}
\end{equation}
The uniform bound on the $D^n_i$ obviously correspond to the bound $\|\mbox{div}\,a\|_{L^\infty}$. We also impose that
\begin{equation}
\|\tilde f\|_{W^{1,\infty}}\leq C\,\|f\|_{W^{1,\infty}}. \label{boundftilde}
\end{equation}

This general expression of the discretization allows for many (but not all) of the classical schemes such as Lax-Friedrichs, upwind schemes...
In particular multi-points schemes are included in the formulation. We only impose that too much weight not be given to far away points. This translates into a simple moment condition on the flux~: There exists a constant $C$ and $0<\gamma\leq 1$ s.t.
\begin{equation}
\sum_m |i-m|^\gamma\,|F^k_{i-m}(a_m^n,u_m^n)|\leq C\,\|f'\|_{L^\infty}\,\|a^n\|_{l^p}\,\|u^n\|_{l^{p^*}}. \label{momentsflux}
\end{equation}
We now introduce the discretized version of our semi-norms, namely
\begin{equation}
\|u^n\|_{\alpha,p,\theta}^p=\sup_{h\leq \delta x^{\alpha}} |\log h|^{\theta}\,\delta x^{2d}\,\sum_{i,j} K^h_{i-j}\,|u_i^n-u_j^n|,\label{discreteseminorms}
\end{equation}
where the discrete kernel is directly obtained from $K_h(x)$ given by \eqref{defKh} through
\[
K^h_{i}=K_h(i\,\delta x).
\]
The main difference with respect to the continuous semi-norms if that we do not take the supremum over all possible values of $h$. This is because they do not make sense below a certain size depending on the discretization length $\delta x$. This is why we limit the supremum to those $h\geq \delta x^{\alpha}$. Those semi-norms still provide compactness whenever $\alpha>0$ and $\theta<1$.
%%%%%%%%%%%%%%%%%%%%%%%%%%%%%%%%%%%%%%%%%%%%
\subsection{Main Result}
%%%%%%%%%%%%%%%%%%%%%%%%%%%%%%%%%%%%%%%%%%%%%
We are then able to obtain the exact equivalent of Theorem \ref{thregularity}
\begin{thm} Assume that $u^n_i$ is a solution to the recursive scheme given by \eqref{eq:sheme0}-\eqref{conservative} with functions $b_{i,j}(a,u)$ increasing in $u$ and s.t. $b_{i,i}(a,u)-u/2$ is increasing in $u$. Assume moreover that the scheme satisfies \eqref{normflux}, \eqref{conservemass} together with the discretization of the divergence provided that \eqref{divcondition} with the bound \eqref{boundftilde}. Assume finally that the moments' condition \eqref{momentsflux} is satisfied for some $0<\gamma\leq 1$. Then for any $\alpha>0$, any $1<p\leq 2$, any $\theta\geq 1-1/p$ and any $q>p^*$, one has the bound
\[\begin{split}
\|u^n\|_{\alpha,1,\theta}\leq &e^{C\,\|f'\|_{L^{\infty}}\,\sup_m\|D^m\|_{L^\infty}\,n\,\delta t}\,\left(
\|f'\|_{L^{\infty}}\,\sup_m \|u^m\|_{l^{q}} \,\delta t\,\sum_{m\leq n} \|a^m\|_{d,W^{1,p}}
\right.\\
&+\|f'\|_{L^{\infty}}\,\sup_m \|u^m\|_{l^{p^*}}\,\delta t\,\sum_{m\leq n} \|D^m\|_{\alpha,p,p(\theta-1/p^*)}+\|u^0\|_{\alpha,1,\theta}\\
&\left.+\delta x^{\gamma-\alpha-\alpha\,\theta}\,\|f'\|_{L^{\infty}}\,\sup_m \|u^m\|_{l^{p^*}}\,\delta t\,\sum_{m\leq n} \|a^m\|_{l^p}\right).
\end{split}
\]
\label{convergencenumerics}
\end{thm}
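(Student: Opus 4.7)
The plan is to follow, at the discrete level, the same commutator strategy used for Theorem \ref{thregularity}: derive a one-step inequality for the discrete semi-norm $Q^n(h):=\delta x^{2d}\sum_{i,j} K^h_{i-j}|u_i^n-u_j^n|$, split the resulting term into a commutator of $K^h$ with the numerical advection, a divergence contribution, and a grid-size truncation error, and close via discrete Gronwall. The three explicit terms in the bound are in one-to-one correspondence with these three contributions, the last one acquiring the factor $\delta x^{\gamma-\alpha-\alpha\theta}$ from balancing the flux-moment exponent $\gamma$ against the supremum range $h\geq\delta x^\alpha$ and the $\log$-weight in \eqref{discreteseminorms}.

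First I would estimate $Q^{n+1}(h)$. The assumption that $b_{i,m}(a,\cdot)$ is non-decreasing, together with $b_{i,i}(a,u)-u/2$ non-decreasing (which plays the role of the CFL condition after symmetrization in $(i,j)$), gives a Crandall-Tartar type bound
\[
|u_i^{n+1}-u_j^{n+1}|\leq \mathrm{sgn}(u_i^n-u_j^n)\,\sum_m\bigl[b_{i,m}(a_m^n,u_m^n)-b_{j,m}(a_m^n,u_m^n)\bigr],
\]
up to a non-negative entropy-dissipation remainder that may be dropped. Plugging in the conservative decomposition \eqref{conservative} and using \eqref{normflux} to pull out the velocity and the flux $f(u)$, the inner sum reorganizes into the discrete analog of the classical integrand $\mathrm{sgn}(u(x)-u(y))\bigl[\mathrm{div}_x(a(x)f(u(x)))-\mathrm{div}_y(a(y)f(u(y)))\bigr]$, so that $Q^{n+1}(h)-Q^n(h)$ mimics the $L^1$ time derivative computed in the proof of Theorem \ref{thregularity}.

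The resulting expression then splits into three pieces, to be treated separately. The first is a discrete commutator of $K^h$ with the advection by $a^n$ applied to $f(u^n)$: by the commutator estimate from Section \ref{sec:commutator}, applied with $q>p^*$ and $\theta\geq 1-1/p$, it is controlled by $\|f'\|_{L^\infty}\|u^n\|_{l^q}\|a^n\|_{d,W^{1,p}}$ after division by the appropriate $\log$-weight, yielding the first term in the bound. The second is the divergence contribution $\delta t\sum_{i,m}K^h_{i-m}D_m^n\,\tilde f(u_m^n)$ coming from \eqref{divcondition}: Hölder together with \eqref{boundftilde} and the definition \eqref{discreteseminorms} applied to $D^n$ yields $\|D^m\|_{\alpha,p,p(\theta-1/p^*)}\|u^m\|_{l^{p^*}}\|f'\|_{L^\infty}$, plus a multiplicative term of the form $\|f'\|_{L^\infty}\|D^n\|_{L^\infty}\,Q^n(h)$ that feeds into Gronwall. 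The third is the grid-truncation error arising because the discrete flux is not a pointwise derivative of $af(u)$: using \eqref{momentsflux} together with the pointwise bound $|K^h_{i-m}-K^h_{j-m}|\lesssim |i-j|\,\delta x\,(|i-m|\delta x+h)^{-(d+1)}$, one arrives at a bound of the form $\delta x^\gamma\,h^{-1}\,\|a^n\|_{l^p}\|u^n\|_{l^{p^*}}\|f'\|_{L^\infty}$, and taking $\sup_{h\geq\delta x^\alpha}$ against the $\log$-weight in \eqref{discreteseminorms} generates the stated exponent $\gamma-\alpha-\alpha\theta$.

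Finally I would sum over $n$, take the supremum over $h\geq\delta x^\alpha$, and apply discrete Gronwall to produce the exponential prefactor. The hardest part will be the rearrangement in the first step: the continuous proof uses a chain rule inside the sign function that has no direct analog in the discrete scheme, so the discrete commutator structure must be extracted from the Crandall-Tartar bound via a careful symmetrization in $(i,j,m)$, shifting the dummy index $m$ to trade the translation $j\to i$ inside $b_{j,m}$ for a translation in $m$ that can then be absorbed by the moment condition \eqref{momentsflux}. A secondary subtlety is the bookkeeping in the third contribution, where the interplay between $\delta x^\gamma$, the decay of $\nabla K^h$, and the $\log$-weight must be tracked precisely to produce the sharp exponent $\gamma-\alpha(1+\theta)$.
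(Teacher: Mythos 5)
Your proposal follows essentially the same route as the paper: a discrete Kruzkov doubling obtained from the monotonicity of $b_{i,m}$ and of $b_{i,i}(a,u)-u/2$, a splitting into a commutator term, a divergence term bounded as in \eqref{boundDterm}, and a moment-condition truncation error in $\delta x^\gamma h^{-(1+\gamma)}$, followed by a discrete Gronwall argument. The only step you leave implicit is how the continuous commutator estimate of Lemma \ref{lemcommutator} is actually invoked on the discrete sum --- the paper does this by building piecewise constant/linear interpolants $\tilde u^n,\tilde a^n$ satisfying \eqref{continuousdiscretenorms} --- but this is a detail of execution rather than a different idea.
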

\begin{rem}
One can always choose $\theta=1/2$ in the previous estimate. However one has to choose $\alpha$ in terms of $\gamma$. This is always possible, {\em i.e.} there always exists $\alpha>0$ s.t. $\gamma-\alpha-\alpha\,\theta>0$. This ensures that the last term in $O(\delta x^{\gamma-\alpha-\alpha\,\theta})$ indeed vanishes as the grid size decreases to $0$.
\end{rem}
This result easily implies the compactness of the solutions to the scheme with for example
\begin{corollary} (Convergence of the scheme)
Consider a sequence of solutions $u^{n,m}_i$ for schemes satisfying the assumptions of Theorem \ref{convergencenumerics} with a grid size $\delta x_m\rightarrow 0$ and a time step $\delta t_m\rightarrow 0$. Assume that the following quantities are uniformly in $m$, namely for some $q>p^*$ and some $\theta>0$ and $\alpha>0$
\[
\begin{split}
& \sup_m\,\delta t_m\,\sum_{n\leq T/\delta t_m} \|a^{n,m}\|_{d,W^{1,p}}<\infty,\quad \sup_m\,\sup_n \|u^{n,m}\|_{l^{q}}<\infty,\\
&\sup_m\,\sup_{n,i} |D^{n,m}_i|<\infty,  \quad \sup_m\,\sup_n \|D^{n,m}\|_{\alpha,p,\theta}<\infty,\quad \sup_m\,\sup_n \|u^{0,m}\|_{\alpha,1,\theta}<\infty.
\end{split}
\]
Consider the sequence of functions $\tilde u^m$ on $[0,\ T]\times \R^d$ piecewise constant and equal to $u^{n,m}_i$ on the time interval $[n\,\delta t_m,\;(n+1)\,\delta t_m)$ times the cube centered at $x_i=i\,\delta x_m$ and of size $\delta x_m$. Then the sequence $\tilde u^m$ is compact in $L^1_{loc}$. 
\end{corollary}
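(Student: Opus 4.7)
The plan is to invoke Theorem \ref{convergencenumerics} to obtain a uniform-in-$m$ spatial regularity bound on the iterates $u^{n,m}$, to transfer this bound to the piecewise constant function $\tilde u^m$, and then to combine it with a weak time-equicontinuity estimate from the scheme through an Aubin--Lions / Riesz--Fr\'echet--Kolmogorov argument.

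First I apply Theorem \ref{convergencenumerics} with $\bar\theta:=\theta/p+1/p^*$, so that $p(\bar\theta-1/p^*)=\theta$ matches the exponent on the hypothesis $\|D^{n,m}\|_{\alpha,p,\theta}$. For this choice $\bar\theta\geq 1-1/p$ is automatic, and after possibly reducing $\theta$ one may assume $\bar\theta<1$. Taking $\alpha^*\geq \alpha$ strictly smaller than $\gamma/(1+\bar\theta)$---which is possible provided $\alpha$ is chosen sufficiently small, as is implicit in the statement---guarantees that the residual term $\delta x_m^{\gamma-\alpha^*-\alpha^*\bar\theta}$ in Theorem \ref{convergencenumerics} vanishes as $m\to\infty$. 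Together with the uniform $l^q$ bound (which controls $l^{p^*}$ via interpolation) and the assumed bounds on $\|a^{n,m}\|_{d,W^{1,p}}$, on $\sup_{n,i}|D^{n,m}_i|$, on $\|D^{n,m}\|_{\alpha,p,\theta}$ and on $\|u^{0,m}\|_{\alpha,1,\theta}$, the theorem then yields
\[
\sup_{m,\,n\leq T/\delta t_m}\,\|u^{n,m}\|_{\alpha^*,1,\bar\theta}<\infty.
\]

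Next I transfer this discrete bound to $\tilde u^m$. On scales $h\geq\delta x_m^{\alpha^*}$, the kernel $K_h(x-y)$ is essentially constant over each mesh cube, so a cube-by-cube comparison gives
\[
\int_{\R^{2d}} K_h(x-y)\,|\tilde u^m(t,x)-\tilde u^m(t,y)|\,dx\,dy\;\lesssim\;\delta x_m^{2d}\sum_{i,j}K^h_{i-j}\,|u^{n,m}_i-u^{n,m}_j|,
\]
which is uniformly bounded by the previous step. For smaller scales, $\tilde u^m(\cdot+y)-\tilde u^m$ is supported in a neighborhood of the mesh interfaces of measure $O(|y|/\delta x_m)$ per cell, so together with the uniform $l^q$ bound this contribution goes to zero as $|y|\to 0$ uniformly in $m$. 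Altogether $\tilde u^m$ is uniformly bounded in $L^\infty(0,T;W_{\log,\theta''}^1)$ for some $\theta''<1$, and Proposition \ref{propembedding} delivers compactness in the space variable.

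For time equicontinuity, testing the recursion against a smooth $\phi$ and using the conservative form \eqref{conservative}, a discrete summation by parts combined with the moment condition \eqref{momentsflux} and the normalization \eqref{normflux} yields
\[
\Big|\sum_i(u^{n+1,m}_i-u^{n,m}_i)\,\phi(x_i)\,\delta x_m^d\Big|\;\leq\; C\,\delta t_m\,\|\nabla\phi\|_{L^\infty},
\]
so that $\partial_t\tilde u^m$ is uniformly bounded in $W^{-1,\infty}_{loc}$. A standard Aubin--Lions argument, using the spatial compactness above, the uniform $L^\infty_t L^q_x$ bound, and this time regularity, then yields the desired compactness of $\tilde u^m$ in $L^1_{loc}((0,T)\times\R^d)$. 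The main obstacles are the parameter matching in the first step and the careful comparison of discrete and continuous semi-norms at the crossover scale $h\sim\delta x_m^{\alpha^*}$; once these are in place, the time estimate and the Aubin--Lions conclusion are routine.
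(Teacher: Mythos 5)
The paper gives no explicit proof of this corollary (it is presented as an easy consequence of Theorem \ref{convergencenumerics} together with the compactness encoded in the semi-norms), and your architecture --- uniform discrete semi-norm bounds from Theorem \ref{convergencenumerics}, transfer to $\tilde u^m$, spatial compactness \`a la Proposition \ref{propembedding}, plus a weak time-equicontinuity estimate and an Aubin--Lions argument --- is exactly the intended route. Two points in your write-up do not hold as stated, though both are repairable. First, the intermediate-scale translation estimate is wrong: for shifts $y$ with $\delta x_m\leq|y|\leq\delta x_m^{\alpha^*}$ the set where $\tilde u^m(\cdot+y)\neq\tilde u^m(\cdot)$ is not a thin neighborhood of the mesh interfaces (the shift crosses many cells), and the semi-norm bound controls $\|\tilde u^m(\cdot+y)-\tilde u^m\|_{L^1}$ only on average over $y$, not for each individual $y$. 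The clean fix is to drop all estimates below the scale $\delta x_m^{\alpha^*}$: for each fixed $h>0$ one has $\delta x_m^{\alpha^*}\leq h$ for all large $m$, so $\|\tilde u^m-\bar K_h\star\tilde u^m\|_{L^1}\leq C\,|\log h|^{\bar\theta-1}$ uniformly in large $m$, while $\{\bar K_h\star\tilde u^m\}_m$ is compact in $L^1_{loc}$ for each fixed $h$; a diagonal argument then gives spatial compactness. Second, your parameter adjustments go the wrong way: the semi-norms are decreasing in $\theta$ and the discrete ones are increasing in $\alpha$ (the supremum is over $h\geq\delta x^{\alpha}$), so a hypothesis at $(\alpha,\theta)$ yields one at $(\alpha',\theta')$ only for $\alpha'\leq\alpha$ and $\theta'\geq\theta$; one cannot ``reduce $\theta$'' nor take $\alpha^*\geq\alpha$ for free. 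What is actually needed, and must be read as implicit in the statement, is $\theta<1$ (so that $\bar\theta=\theta/p+1/p^*<1$), after which one takes $\alpha^*=\min(\alpha,\alpha_0)$ with $\alpha_0(1+\bar\theta)<\gamma$.

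A further small caveat: both the vanishing of the $\delta x^{\gamma-\alpha-\alpha\theta}$ term in Theorem \ref{convergencenumerics} and the constant in your time-equicontinuity bound require control of $\delta t_m\sum_{n}\|a^{n,m}\|_{l^p}$, which is not among the listed hypotheses; you should either add it or note that it is implicitly assumed. With these repairs your argument is complete and is the proof the authors intend.
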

\begin{rem}
With a minor variation, the assumption that
\[
\sup_m\,\sup_n \|D^{n,m}\|_{\alpha,p,\theta}<\infty,\quad \sup_m\,\sup_n \|u^0\|_{\alpha,1,\theta}<\infty
\]
can be replaced by assuming appropriate compactness on the initial data $u^{0,m}$ and the discrete divergence $D^{n,m}_i$.
\end{rem}
%
%%%%%%%%%%%%%%%%%%%%%%%%%%%%%%%%%%%%%%%%%%%%
\section{The main commutator estimate \label{sec:commutator}}
%%%%%%%%%%%%%%%%%%%%%%%%%%%%%%%%%%%%%%%%%%%%%%%
\subsection{The estimate}
%%%%%%%%%%%%%%%%%%%%%%%%%%%%%%%%%%
The key point in the proof of the results of this article is a commutator estimate, quantifying the basic one introduced in \cite{DL}. This estimate is important in itself and is likely to be of further use. 
\begin{prop}
Let $1<p<\infty,$ $\exists C<\infty$ depending only on $p$ and the dimension s.t. $\forall a\in W^{1,p}(\R^d)$ with $1\leq p\leq 2$ and 
$\forall g\in L^{2p^*}$ with $1/p^*=1-1/p$,\label{commutatorestimate}
\begin{align*}
&\int_{\mathbb{R}^{2d}}\nabla K_{h}\left(x-y\right)\left(a\left(x\right)-a\left(y\right)\right)\,
\left|g\left(x\right)-g\left(y\right)\right|^{2}\,dx\,dy\\
&\quad\leq C\left\Vert \nabla a\right\Vert _{B^0_{p,q}}\,\left|\log h\right|^{1-1/q}\,\left\Vert g\right\Vert_{L^{2\,p^*}}^2\\
&\quad\ +C\,\left\Vert \mbox{div}\,a\right\Vert _{L^{\infty}}\,\int_{\mathbb{R}^{2d}}K_{h}\left(x-y\right)\,\left|g\left(x\right) -g\left(y\right)\right|^{2}\,dx\,dy.
\end{align*}
In particular using $q=2$,
\begin{align*}
&\int_{\mathbb{R}^{2d}}\nabla K_{h}\left(x-y\right)\left(a\left(x\right)-a\left(y\right)\right)\,
\left|g\left(x\right)-g\left(y\right)\right|^{2}\,dx\,dy\\
&\quad\leq C\left\Vert \nabla a\right\Vert _{L^p}\,\left|\log h\right|^{1/2}\,\left\Vert g\right\Vert_{L^{2\,p^*}}\\
&\quad\ +C\,\left\Vert \mbox{div}\,a\right\Vert _{L^{\infty}}\,\int_{\mathbb{R}^{2d}}K_{h}\left(x-y\right)\,\left|g\left(x\right) -g\left(y\right)\right|^{2}\,dx\,dy.
\end{align*}
\end{prop}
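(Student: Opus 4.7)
The plan is to decompose the increment $a(x)-a(y)$ via Taylor's formula, to isolate the divergence contribution (which yields the second term of the RHS), and then to estimate the remaining traceless piece by combining angular cancellation with a dyadic Besov argument.

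Write $a(x)-a(x-z) = \int_0^1 \nabla a(x-sz)\,z\, ds$, so the integrand of the left-hand side contains $\sum_{j,k}(\partial_j K_h)(z)\, z_k\, (\partial_k a_j)(x-sz)$. Split the matrix $[(\partial_j K_h)(z)\, z_k]_{jk}$ into its scalar and traceless parts, $\tfrac{1}{d}(z \cdot \nabla K_h(z))\delta_{jk} + M_{jk}(z)$, where $M(z)$ is traceless with vanishing mean on each sphere $|z|=r$. The scalar piece contributes $\tfrac{1}{d}(z\cdot\nabla K_h(z))\,\mathrm{div}\,a(x-sz)$; since an elementary computation gives $|z\cdot\nabla K_h(z)|\le C\,K_h(z)$ on $\mathrm{supp}\,\phi$, this contribution is dominated by $C\,\|\mathrm{div}\,a\|_{L^\infty}\int K_h(x-y)|g(x)-g(y)|^2\,dx\,dy$, exactly the second term of the right-hand side.

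For the traceless part, I would freeze $\nabla a$ at $x$ by writing $(\partial_k a_j)(x-sz) = (\partial_k a_j)(x) + [(\partial_k a_j)(x-sz) - (\partial_k a_j)(x)]$. In the frozen contribution $(\partial_k a_j)(x)$ is independent of $z$, so the angular-mean-zero property of $M$ produces cancellation, leaving only a residual oscillation term from the non-radial dependence of $|g(x)-g(x-z)|^2$ on $z$. The shift remainder is controlled, via Hölder in $x$ at exponents $p,p^*$ and the trivial bound $\|g(\cdot)-g(\cdot-z)\|_{L^{2p^*}} \le 2\|g\|_{L^{2p^*}}$, by $C\,\|g\|_{L^{2p^*}}^2 \int |M(z)|\,\omega_p(\nabla a, |z|)\,dz$, where $\omega_p$ denotes the $L^p$-modulus of continuity.

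Finally, decompose the $z$-integral dyadically into annuli $|z|\sim 2^{-m}$ for $m = 0,\ldots,M\sim |\log h|$, with $\int_{|z|\sim 2^{-m}} |M(z)|\,dz = O(1)$ on each annulus. The traceless contribution is then bounded by $C\|g\|_{L^{2p^*}}^2 \sum_{m=0}^{M} \omega_p(\nabla a, 2^{-m})$, and Hölder's inequality in the $\ell^q$ exponent gives
$$\sum_{m=0}^M \omega_p(\nabla a, 2^{-m}) \le |\log h|^{1-1/q}\Big(\sum_m \omega_p(\nabla a, 2^{-m})^q\Big)^{1/q} \le C\,\|\nabla a\|_{B^0_{p,q}}\,|\log h|^{1-1/q},$$
via the equivalent characterization of $B^0_{p,q}$ through $L^p$-moduli of continuity. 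The $q=2$ special case then follows from the continuous embedding $L^p \hookrightarrow B^0_{p,2}$ valid for $1<p\le 2$. The main obstacle I anticipate is the rigorous execution of the angular cancellation at the $L^p$ level when $g$ is only in $L^{2p^*}$: the factor $|g(x)-g(x-z)|^2$ depends on $z$, so the frozen-coefficient integral against $M_{jk}(z)$ does not vanish by pure sphere-averaging, and the residual must be quantified via a Calderón--Zygmund or paraproduct-style estimate applied to the principal-value component of the traceless kernel.
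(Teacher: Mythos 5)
Your overall architecture (peel off the divergence through the trace part of the kernel, exploit angular cancellation in the traceless part, decompose dyadically in $|z|$, apply H\"older in the dyadic index to produce $|\log h|^{1-1/q}$, finish with $L^p\subset B^0_{p,2}$) parallels the paper's, but the central quantitative step fails. You reduce the traceless contribution to $\|g\|_{L^{2p^*}}^2\sum_{m=0}^{M}\omega_p(\nabla a,2^{-m})$ with $M\sim|\log h|$ and then invoke $\bigl(\sum_m\omega_p(\nabla a,2^{-m})^q\bigr)^{1/q}\le C\|\nabla a\|_{B^0_{p,q}}$. The characterization of Besov norms by $L^p$-moduli of continuity is valid only for positive smoothness; at $s=0$ it is false. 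Concretely, if $\nabla a$ is a single Littlewood--Paley block at frequency $2^{k_0}$ with unit $L^p$ norm, then $\|\nabla a\|_{B^0_{p,q}}\sim1$ for every $q$ while $\omega_p(\nabla a,2^{-m})\sim 1$ for all $m\le k_0$; taking $k_0\ge M$, your sum is $\sim M=|\log h|$, not $|\log h|^{1-1/q}$. Your argument therefore only recovers the trivial bound $C\,|\log h|\,\|\nabla a\|_{L^p}\|g\|_{L^{2p^*}}^2$, which already follows from $\|a(\cdot)-a(\cdot-z)\|_{L^p}\le|z|\,\|\nabla a\|_{L^p}$ and $\int_{|z|\le 2}|z|\,|\nabla K_h(z)|\,dz\sim|\log h|$; with the normalization $|\log h|^{-\theta}$, $\theta<1$, in the semi-norms, a full power of $\log$ yields no regularity at all.

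The missing idea is that the gain from $|\log h|$ to $|\log h|^{1-1/q}$ comes from \emph{two-sided} frequency decay of single-scale mean-zero convolutions, not from moduli of continuity: for $L$ compactly supported with $\int L=0$ and $L\in W^{s,1}$ one has $\|L_r\star U_k\|_{L^p}\lesssim\min\left((r2^k)^s,(r2^k)^{-s}\right)\|U_k\|_{L^p}$, so each frequency block contributes to only $O(1)$ dyadic scales $r=2^{-m}$ and $\sum_{m\le M}\|L_{2^{-m}}\star\nabla a\|_{L^p}\lesssim M^{1-1/q}\|\nabla a\|_{B^0_{p,q}}$ (the paper's Lemma \ref{lem:convolution}). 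The modulus of continuity has only the small-$r$ decay, so a single block pollutes all $M$ scales. Exploiting the two-sided bound requires keeping the $z$-integral as an honest convolution acting on $\nabla a$ (or on $g$), which is obstructed by the $z$-dependence of $|g(x)-g(x-z)|^2$ --- exactly the obstacle you flag at the end without resolving it. The paper's resolution is to expand $|g(x)-g(y)|^2$ into $g^2(x)$, $g(x)g(y)$ and $g^2(y)$: the $g^2(x)$ term vanishes exactly by the angular mean-zero property, while the other two become convolutions $\tilde L_r\star g$ and $\tilde L_r\star g^2$ against a single-scale mean-zero kernel, which are then transferred onto $\nabla a$ by evenness and estimated by Lemma \ref{lem:convolution}. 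Without this step, or an equivalent rigorous paraproduct argument in place of the one you only gesture at, the proof does not close.
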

%
%%%%%%%%%%%%%%%%%%%%%%%%%%%%%%%%%%%%%%%
\subsection{The connection with the classical theory of renormalized solutions}
%%%%%%%%%%%%%%%%%%%%%%%%%%%%%%%%%%%%%%%
In essence Prop. \ref{commutatorestimate} is a quantified version of the classical commutator estimate at the heart of the theory of renormalized solutions, which for this reason we describe more here.

This theory was introduced by DiPerna and Lions in \cite{DL} to handle the well posedness of weak solutions to the linear continuity equation \eqref{eq:continuity}, which we recall is
\[
\partial_t u+\textrm{div}\left(a\left(t,x\right) \,u\left(t,x\right)\right)=0.
\]
If $a\in L^1_t L^p_x$, a weak solution $u\in L^\infty_t L^q_x$ to \eqref{eq:continuity} is said to be renormalized iff for any $\beta\in C^1(\R)$ with $|\beta(\xi)|\leq C\,|\xi|$, $\beta(u)$ is a solution to
\[
\partial_t \beta(u)+\textrm{div}\left(a\left(t,x\right) \,\beta(u)\right)+\textrm{div}\,a\,(\beta'(u)\,u-\beta(u))=0.
\]
An equation for a given $a$ is renormalized iff all weak solutions are renormalized. This is now an important property which directly implies uniqueness: If $u$ is a weak solution with $u(t=0)=0$ then $|u|$ is also a weak solution and hence
\[
\int |u(t,x)|\,dx=\int |u(t=0,x)|\,dx=0.
\]
This also indirectly implies the compactness of any sequence of solutions $u_n$. Essentially one combines the uniqueness with the renormalization property at the limit to prove that
\[
w-\lim \beta(u_n)=\beta(w-\lim u_n),
\] 
where $w-\lim$ denotes the weak limit in the appropriate $L^q$ space.

DiPerna and Lions proved in \cite{DL} that if $a\in L^1_t W^{1,1}_x$ then any bounded solution $u$ is renormalized. The proof relies on the following simple but powerful idea: Given a weak solution $u$, consider a smooth convolution kernel $K_\eps$ and convolve the equation with $K_\eps$
\[
\partial_t K_\eps\star u+\textrm{div}\left(a \,K_\eps\star _x u\right)=R_\eps.
\]
Of course $K_\eps\star u$ cannot be also a solution and there is a remainder term which can be rewritten as
\begin{equation}\begin{split}
R_\eps&=\int \textrm{div}_x\left((a(x)-a(y))\,K_\eps(x-y)\,u(y)\right)\,dy\\
&=\int (a(x)-a(y))\,\nabla K_\eps(x-y)\,u(y)+\int \textrm{div}_x a(x)\,K_\eps(x-y)\,u(y)\,dy\\
&=C_\eps+D_\eps. 
\end{split}\label{commut}
\end{equation}
For a fixed $a\in L^1_t W^{1,1}_x$ and $u\in L^\infty$, one can then prove that $R_\eps$ converges to $0$ in $L^1$. 

It is then straightforward to write an equation on $\beta(u_\eps)$ and pass to the limit as $\eps\rightarrow 0$.

This idea was then extended to include $a\in L^1 BV_x$, first in \cite{Bo} in the kinetic context and then in the seminal \cite{Am} in the general case; we also refer to \cite{HLL, LL}. Those require the use of specific kernels, based on a quadratic form in $\R^d$ that is adapted to the singular part of $\nabla a$. 
 
In general, without any additional structure, $BV$ seems to be the critical space here as proved in \cite{DeP}. If some additional structure is available, then one may be able to work with less. Typical examples are found in dimension $2$, in \cite{ABC, CCR, CR, Ha1} or with some phase space structure in \cite{CJ,JabMas}.

As one can readily the quantity that we bound explicitly in Prop. \ref{commutatorestimate} is very close to the commutator estimate \eqref{commut}. In fact this proposition could be used to directly give an explicit bound in $\eps$ in \eqref{commut} for the particular $K_\eps$ that we use. It is in this sense that we talk of a quantified commutator estimate.

The fact that renormalized solutions are connected to some regularity of the solutions had been noticed for instance in \cite{ADM}. But this regularity had not been quantified until \cite{CD} that we mentioned earlier. As we explained, \cite{CD} involves a quantitative regularity estimate at the level of the characteristics. The approach that we follow here is very different and it is a nice feature to be able to quantify exactly this commutator. 

For more on renormalized solutions, we refer for example to the surveys in \cite{AmCr, DeL}.
%%%%%%%%%%%%%%%%%%%%%%%%%%%%%%%%%%%%%%%
\subsection{Useful Technical Lemmas}
%%%%%%%%%%%%%%%%%%%%%%%%%%%%%%%%%%%%%%%
The proof of Prop. \ref{commutatorestimate} requires the use of two classical lemmas. First to relate the difference $a(x)-a(y)$ to $\nabla a$,
\begin{lem}
\label{lem:psi} There exists a bounded function $\psi$ s.t.  $\psi$ is $W^{1,1}$ on $B\left(0,1\right)\times S^{d-1}$ and for any $a\in (BV_{loc})^d$
\[
\begin{split}
 a_{i}\left(x\right)-a_{i}\left(y\right)  &=\left|x-y\right|\int_{B\left(0,1\right)}\psi\left(z,\frac{x-y}{\left|x-y\right|}\right)\cdot\nabla a_{i}\left(x+\left|x-y\right|z\right)\frac{dz}{\left|z\right|^{d-1}}\\
 &  +\left|x-y\right|\int_{B\left(0,1\right)}\psi\left(z,\frac{x-y}{\left|x-y\right|}\right)\cdot\nabla a_{i}\left(y+\left|x-y\right|z\right)\frac{dz}{\left|z\right|^{d-1}}.
\end{split}
\]
Moreover for some given constant $\alpha$
\[
\int_{B\left(0,1\right)}\psi\left(z,\frac{x-y}{\left|x-y\right|}\right)\frac{dz}{\left|z\right|^{d-1}}=\alpha\,\frac{x-y}{\left|x-y\right|}.
\]
\end{lem}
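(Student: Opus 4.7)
First I would reduce to smooth $a$ by mollification: once the identity is established for $a\in C^\infty$ the case $a\in(BV_{loc})^d$ follows by applying the formula to $a_\varepsilon=a\star\rho_\varepsilon$ and passing to the limit, using $L^1$ continuity of translations and weak convergence of $\nabla a_\varepsilon$ as a measure. For $a\in C^\infty$ the starting point is the fundamental theorem of calculus along rays: for every $\sigma\in S^{d-1}$,
\[
a(x+r\sigma)-a(x)=\int_0^r \sigma\cdot\nabla a(x+s\sigma)\,ds,\qquad r=|x-y|,
\]
and analogously centered at $y$. Switching from polar coordinates $(s,\sigma)$ to the Cartesian variable $z=(s/r)\sigma\in B(0,1)$ produces exactly the Jacobian $s^{d-1}ds\,d\sigma=r^{d}|z|^{d-1}\,dz/|z|^{d-1}$, which is what forces the singular weight $dz/|z|^{d-1}$ in the statement. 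With the radial ansatz $\psi(z,e)=\mu(z/|z|,e)\,z/|z|$ for a suitable angular density $\mu$ on $S^{d-1}\times S^{d-1}$, integrating each ray identity against $\mu(\sigma,e)\,d\sigma$ turns the right-hand side of the lemma into a pair of $\nabla a$ integrals on $B(x,r)$ and $B(y,r)$, plus two ``residual'' spherical-average terms on $\partial B(x,r)$ and $\partial B(y,r)$.

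The heart of the proof is then to choose $\mu(\sigma,e)$ so that these spherical-average terms telescope to exactly $a(x)-a(y)$ for every smooth $a$. Using $y+re=x$ and $x-re=y$, the required condition is that $\mu(\sigma,e)\,d\sigma$ be a bounded regularization of the signed measure $\tfrac12(\delta_{\sigma=-e}-\delta_{\sigma=+e})$; this can be realized by taking $\mu(\sigma,e)=\eta(\sigma\cdot e)$ for an appropriate bounded odd scalar $\eta$ on $[-1,1]$ supported away from $0$, and the construction is smooth in the direction variable $e\in S^{d-1}$ and bounded on $S^{d-1}$, yielding a bounded $\psi$ which is $W^{1,1}$ on $B(0,1)\times S^{d-1}$. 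The normalization $\int_{B(0,1)}\psi(z,e)\,dz/|z|^{d-1}=\alpha e$ is then immediate: in polar form it becomes $\int_{S^{d-1}}\mu(\sigma,e)\,\sigma\,d\sigma$, which must be parallel to $e$ by the rotational invariance of the construction around the axis through $x$ and $y$; the constant $\alpha=1/2$ is pinned down by testing the representation on the linear function $a(w)=e\cdot w$.

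The main obstacle is the exact telescoping: producing a bounded (not distributional) kernel $\mu$ for which the identity
\[
\int_{S^{d-1}}\mu(\sigma,e)\,\bigl[a(x+r\sigma)+a(y+r\sigma)-a(x)-a(y)\bigr]\,d\sigma \;=\; a(x)-a(y)
\]
holds for every smooth $a$, while at the same time keeping $\mu$ bounded and smoothly dependent on the direction $e$. This is a single linear constraint on the sphere and therefore has many solutions; the concrete choice amounts to spreading the required point masses at $\pm e$ over a hemisphere in a rotation-equivariant way. Once $\mu$ is fixed, all remaining steps, including the verification of the normalization and of the $W^{1,1}$ regularity of $\psi$, reduce to direct bookkeeping with the polar-to-Cartesian change of variables.
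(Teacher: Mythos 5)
Your reduction to smooth $a$ and the polar--Cartesian bookkeeping that produces the weight $dz/|z|^{d-1}$ are fine, but the core of the argument fails. With the purely angular ansatz $\psi(z,e)=\mu(z/|z|,e)\,z/|z|$, the fundamental theorem of calculus along rays turns the right-hand side of the lemma into
\[
\int_{S^{d-1}}\mu(\sigma,e)\,\left[a(x+r\sigma)-a(x)\right]d\sigma+\int_{S^{d-1}}\mu(\sigma,e)\,\left[a(y+r\sigma)-a(y)\right]d\sigma,
\]
and requiring this to equal $a(x)-a(y)$ for \emph{every} smooth $a$ is not ``a single linear constraint'': it is an identity of measures on $\R^d$, namely
\[
T_x\mu+T_y\mu-\Big(\int_{S^{d-1}}\mu\Big)\,(\delta_x+\delta_y)=\delta_x-\delta_y,
\]
where $T_x\mu$ is the push-forward of $\mu\,d\sigma$ under $\sigma\mapsto x+r\sigma$, supported on $\partial B(x,r)$. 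Matching atoms shows this has \emph{no} solution, bounded or not: an atom of the left-hand side at $2x-y=x+re$ can only come from $\mu(\{e\})$, forcing $\mu(\{e\})=0$ (and likewise $\mu(\{-e\})=0$ from the point $2y-x$); the atoms at $x$ and at $y$ then come only from $-(\int\mu)(\delta_x+\delta_y)$, which would require $\int_{S^{d-1}}\mu$ to equal both $-1$ and $+1$. A ``bounded regularization of $\tfrac12(\delta_{-e}-\delta_{e})$'' therefore gives your identity only up to an error that does not vanish, so the exact telescoping you rely on is impossible within your ansatz.

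The underlying geometric problem is that your rays emanate from $x$ (resp.\ $y$) toward $\partial B(x,r)$ (resp.\ $\partial B(y,r)$) and never join $x$ to $y$. The construction the paper refers to (see \cite{CJ}) instead integrates $\nabla a$ along curves from $x$ to $y$ --- concretely two-segment paths $x\to w\to y$ through an intermediate point $w$ --- and averages over $w$ with a smooth probability density $\rho$; the telescoping $(a(x)-a(w))+(a(w)-a(y))=a(x)-a(y)$ is then exact for each $w$, so no residual spherical terms ever appear. Running your own change of variables on $\int_0^1(w-x)\cdot\nabla a(x+t(w-x))\,dt$ averaged in $w$ yields a kernel of the form
\[
\psi(z,e)=\frac{z}{|z|}\int_{|z|}^{\infty}u^{d-1}\rho\Big(u\,\frac{z}{|z|},e\Big)\,du,
\]
which is radial in direction but whose magnitude genuinely depends on $|z|$ (it is the tail mass of the averaging density along the ray); this $|z|$-dependence is exactly the degree of freedom your ansatz discards, and it is what makes the representation exact. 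Your normalization argument ($\alpha$ pinned down by testing on linear $a$) is a correct consistency check once the representation holds, but the representation itself is where the proof must be repaired.
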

The proof of Lemma \ref{lem:psi} is straightforward; it consists in integrating $\nabla v$ over a curve between $x$ and $y$ and then averaging the resulting estimate over all such curves. We refer for instance to \cite{CJ} for a detailed proof.

We need another technical result to control slightly ``delocalized'' convolutions of $\nabla a$ 
\begin{lem}
\label{lem:convolution} For any $1<p<\infty,$ any $L\in W^{s,1}$
for some $s>0$ with compact support and $\int L=0,$ there exists
$C>0$s.t. for any $u\in L^{p}\left(\mathbb{R}^{d}\right)$ 
\begin{equation}
\int_{h_{0}}^{1}\left\Vert L_{r}*u\right\Vert _{L^{p}}\frac{dr}{r}\leq C\,|\log h_0|^{1-1/q}\,\left\Vert u\right\Vert _{B_{p,q}^{0}}\label{eq:conv1}
\end{equation}
where $L_{r}\left(x\right)=r^{-d}L\left(\nicefrac{x}{r}\right)$ and
the constant $C$ depends only on $W^{s,1}$and the size of the support
of $L.$ As a consequence for $p\leq2$
\begin{equation}
\int_{h_{0}}^{1}\left\Vert L_{r}*u\right\Vert _{L^{p}}\frac{dr}{r}\leq C\left|\log h_{0}\right|^{\nicefrac{1}{2}}\left\Vert u\right\Vert _{L^{p}}.\label{eq:conv2}
\end{equation}
\end{lem}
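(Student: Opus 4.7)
The plan is to first establish the stronger square-function type bound
\[
\left(\int_0^\infty \|L_r\ast u\|_{L^p}^q \,\frac{dr}{r}\right)^{1/q} \le C\,\|u\|_{B^0_{p,q}},
\]
and then recover \eqref{eq:conv1} by H\"older's inequality applied with respect to the measure $dr/r$ on $[h_0,1]$: the factor $\int_{h_0}^1 dr/r = |\log h_0|$ raised to $1-1/q$ produces exactly the logarithmic loss that appears on the right-hand side.

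For the square-function estimate, the driving idea is that $L_r\ast$ acts as a bandpass filter at scale $r$. I would decompose $u=\sum_j P_j u$ via a Littlewood-Paley partition with $P_j$ Fourier-localized to $|\xi|\sim 2^j$, so that $\widehat{L_r\ast P_j u}(\xi) = \hat L(r\xi)\,\phi_j(\xi)\,\hat u(\xi)$. The compact support of $L$ makes $\hat L$ smooth, and the mean-zero condition $\hat L(0)=0$ gives $|\hat L(\eta)|\le C|\eta|$ near the origin, while the $W^{s,1}$ regularity of $L$ yields the decay $|\hat L(\eta)|\le C|\eta|^{-s}$ at infinity. Consequently the multiplier $m_{r,j}(\xi)=\hat L(r\xi)\phi_j(\xi)$, supported in the annulus $|\xi|\sim 2^j$, satisfies $\|m_{r,j}\|_{L^\infty}\le C\,M_{r,j}$ with $M_{r,j}:=\min(r2^j,(r2^j)^{-s})$, together with matching bounds on its rescaled derivatives. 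H\"ormander-Mikhlin multiplier theory then gives the pointwise-in-scale estimate
\[
\|L_r\ast P_j u\|_{L^p}\le C\,M_{r,j}\,\|\tilde P_j u\|_{L^p},
\]
for a slightly thickened dyadic projector $\tilde P_j$.

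Summing in $j$ and passing to the logarithmic scale $\tau=\log_2 r$, the weight $M_{r,j}$ reads $F(\tau+j)$ with $F(\sigma)=\min(2^\sigma,2^{-s\sigma})\in L^1(\R)$. Young's convolution inequality on $\R$, after interpolating the discrete $j$-sum by a step function, delivers
\[
\left(\int_0^\infty\|L_r\ast u\|_{L^p}^q\,\frac{dr}{r}\right)^{1/q}\le C\,\|F\|_{L^1}\,\Big(\sum_j\|\tilde P_j u\|_{L^p}^q\Big)^{1/q}\le C\,\|u\|_{B^0_{p,q}},
\]
which completes the square-function step. The consequence \eqref{eq:conv2} then follows from \eqref{eq:conv1} with $q=2$ together with the classical embedding $L^p=F^0_{p,2}\hookrightarrow B^0_{p,2}$, valid precisely when $1<p\le 2$.

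The main obstacle is the annular multiplier bound: the $L^\infty$ control on $m_{r,j}$ is transparent, but the derivative bounds required to invoke H\"ormander-Mikhlin demand a careful combination of the compact support of $L$ (which provides smoothness of $\hat L$, hence the linear factor near the origin from the mean-zero assumption) and its Sobolev regularity (which provides the Fourier decay at infinity). Once this is secured, the rest of the argument is essentially bookkeeping: convolution on the logarithmic scale via Young's inequality, plus the standard Triebel-Lizorkin/Besov embedding for \eqref{eq:conv2}.
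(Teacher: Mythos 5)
Your overall architecture coincides with the paper's: a Littlewood--Paley decomposition of $u$, a per-block bound of the form $\|L_r\star P_j u\|_{L^p}\le C\,\min\bigl(r2^j,(r2^j)^{-s}\bigr)\,\|P_j u\|_{L^p}$, summation over blocks, a H\"older step producing the factor $|\log h_0|^{1-1/q}$ (you apply H\"older continuously in $r$ after a square-function bound, the paper integrates in $r$ block by block and applies H\"older to the discrete sum over the $O(|\log_2 h_0|)$ low-frequency blocks --- equivalent bookkeeping), and the embedding $L^p\hookrightarrow B^0_{p,2}$ for $p\le 2$ to deduce \eqref{eq:conv2}. The one genuine problem is precisely the step you flag as the ``main obstacle'', and it cannot be secured in the form you propose. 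In the regime $r2^j\ge 1$ the H\"ormander--Mikhlin hypotheses fail with the constant $M_{r,j}=(r2^j)^{-s}$: one has $\partial^\alpha\bigl[\hat L(r\xi)\bigr]=r^{|\alpha|}(\partial^\alpha\hat L)(r\xi)$, and $\partial^\alpha\hat L=\widehat{(-ix)^\alpha L}$ enjoys no better decay than $\hat L$ itself, namely $|(\partial^\alpha\hat L)(\eta)|\le C(1+|\eta|)^{-s}$. Hence on the annulus $|\xi|\sim 2^j$ the Mikhlin quantity $|\xi|^{|\alpha|}|\partial^\alpha m_{r,j}(\xi)|$ is only controlled by $(r2^j)^{|\alpha|}(r2^j)^{-s}$, which exceeds the target $(r2^j)^{-s}$ by the large factor $(r2^j)^{|\alpha|}$. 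Since the multiplier theorem's operator norm is governed by the worst of these constants, you only get $\min\bigl(r2^j,(r2^j)^{1-s}\bigr)$, which for the relevant range $s<1$ (the kernels actually used in the paper, such as $\tilde\psi(z,\omega)\,|z|^{1-d}\mathbb{I}_{|z|\le 1}$, lie in $W^{s,1}$ only for $s<1$) is not even bounded in $j$, let alone summable.

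The repair is to abandon multiplier theory for the high-frequency blocks and use Young's inequality with fractional derivatives, which is what the paper does: writing $L_r\star U_j=\bigl[(1-\Delta)^{s/2}L_r\bigr]\star\bigl[(1-\Delta)^{-s/2}U_j\bigr]$ gives $\|L_r\star U_j\|_{L^p}\le\|L_r\|_{W^{s,1}}\|U_j\|_{W^{-s,p}}\le C\,r^{-s}2^{-js}\|U_j\|_{L^p}$, using the scaling $\|L_r\|_{\dot W^{s,1}}=r^{-s}\|L\|_{\dot W^{s,1}}$ and Bernstein's inequality for the frequency-localized $U_j$. (For the low-frequency regime $r2^j\le 1$ your multiplier computation does go through, since there the derivative losses $(r2^j)^{|\alpha|}$ are dominated by $r2^j$; the paper instead uses an even more elementary physical-space argument, writing $L_r\star U_j(x)=\int L_r(x-y)\bigl(U_j(y)-U_j(x)\bigr)\,dy$ via the mean-zero condition and bounding $\|U_j(\cdot+z)-U_j\|_{L^p}\le|z|^s\|U_j\|_{W^{s,p}}$ against the moment $\int|z|^sL_r(z)\,dz=r^s\int|z|^sL(z)\,dz$.) With the per-block bounds so corrected, your Young-on-the-logarithmic-scale summation and the final H\"older step are sound and deliver the lemma.
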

The proof is again classical and is given in the appendix for the sake of completeness. 
%%%%%%%%%%%%%%%%%%%%%%%%%%%%%%%%%%%%%%%%%%%%%%%%%%%%%%%%%
\subsection{The proof of Prop. \ref{commutatorestimate}}
%%%%%%%%%%%%%%%%%%%%%%%%%%%%%%%%%%%%%%%%%%%%%%%%%%%%%%%%%%
%
Observe that by the definition of $K_h$, 
\[
\nabla K_{h}=\frac{x-y}{\left(\left|x-y\right|+h\right)^{d+1}}\,\chi(|x-y|),
\]
for some smooth function $\chi$ with support in $|x-y|\leq 2$ with $\chi=1$ if $|x-y|\leq 1$.

Using Lemma \ref{lem:psi}, one obtains
\begin{align*}
&\int_{\mathbb{R}^{2d}}\nabla K_{h}\left(x-y\right)\left(a\left(x\right)-a\left(y\right)\right)\left|g\left(x\right)-g\left(y\right)\right|^2\,dx\,dy\\
&\quad=2\,\int\frac{(x-y)\,\chi}{\left(\left|x-y\right|+h\right)^{d+1}}\,\int_{B\left(0,1\right)}\psi\left(z,\frac{x-y}{\left|x-y\right|}\right).\nabla a\left(x+\left|x-y\right|z\right)\frac{dz}{\left|z\right|^{d-1}}\\
&\qquad\qquad\left|g\left(x\right)-g\left(y\right)\right|^{2}dxdy,
\end{align*}
where by the symmetry of the expression in $x$ and $y$, both terms in Lemma \ref{lem:psi} lead to the same expression.

We introduce the average of $\psi$ as given by Lemma \ref{lem:psi} and decompose accordingly
\begin{align*}
&\int\frac{(x-y)\,\chi}{\left(\left|x-y\right|+h\right)^{d+1}}\,\int_{B\left(0,1\right)}\psi\left(z,\frac{x-y}{\left|x-y\right|}\right)\cdot \nabla a\left(x+\left|x-y\right|z\right)\,\frac{dz}{\left|z\right|^{d-1}}\\
&\qquad\qquad\qquad\left|g\left(x\right)-g\left(y\right)\right|^{2}\,dx\,dy=I+\bar\alpha\,J,
\end{align*}
where
\begin{align*}
I= &  \int\frac{(x-y)\,\chi}{\left(\left|x-y\right|+h\right)^{d+1}}\cdot\int_{B\left(0,1\right)}\left(\psi\left(z,\frac{x-y}{\left|x-y\right|}\right)-\right.\\
 & \left. \bar\alpha\,\frac{x-y}{\left|x-y\right|}\right)\cdot\nabla a\left(x+\left|x-y\right|z\right)\,\frac{dz}{\left|z\right|^{d-1}}\, \left|g\left(x\right)-g\left(y\right)\right|^{2}\,dx\,dy,
\end{align*}
 and 
\begin{align*}
J= & \int\frac{\left(x-y\right)_{i}\,\chi}{\left(\left|x-y\right|+h\right)^{d+1}}\,\frac{\left(x-y\right)_{j}}{\left|x-y\right|}\,\int_{B\left(0,1\right)}\partial_{i}a_{j}\left(x+\left|x-y\right|z\right)\,\frac{dz}{\left|z\right|^{d-1}}\\
&\qquad \left|g\left(x\right)-g\left(y\right)\right|^{2}\,dx\,dy,
\end{align*}
where we used Einstein convention of summation.

The constant $\bar \alpha$ is chosen so that
\[
\int\left(\psi\left(z,\frac{x-y}{\left|x-y\right|}\right)-C\frac{x-y}{\left|x-y\right|}\right)\frac{dz}{\left|z\right|^{d-1}}=0,
\]
which is always possible thanks to Lemma \ref{lem:psi}. Both terms $I$ and $J$ rely on appropriate cancellations that allow to use Lemma \ref{lem:convolution} but on different terms. As such we have to handle them separately.

\bigskip

\noindent \textbf{Control of $I.$} Denote for simplicity
\[
\tilde{\psi}\left(z,\omega\right):=\psi\left(z,\omega\right)-
\bar\alpha\,\omega,\quad
L\left(z,\omega\right):=\tilde{\psi}\left(z,\omega\right)\frac{\mathbb{I}_{|z|\leq 1}}{\left|z\right|^{d-1}}.
\]
One can see easily that for any fixed $\omega$, $L$ is compactly supported and belongs to $W^{s,1}$ for any $s<1$. It hence satisfies the assumptions of Lemma \ref{lem:convolution}, uniformly in $\omega$.

Now observe that
\[
\int_{B\left(0,1\right)}\frac{\tilde{\psi}\left(z,w\right)}{\left|z\right|^{d-1}}\cdot\nabla a\left(x+rz\right)\,dz=L_r(.,\omega)\star \nabla a,
\]
where $L_r(z,\omega)=r^{-d}\,L(z/r,\omega)$. On the other hand by the spherical changes of variables
\[
\begin{split} I=&\int_{0}^{1}\int_{S^{d-1}}\int\frac{r^{d}\,\chi(r)}{\left(r+h\right)^{d+1}}\int_{B\left(0,1\right)}\frac{\tilde{\psi}\left(z,\omega\right)}{\left|z\right|^{d-1}}\nabla a\left(x+rz\right)\,dz\\
&\qquad\qquad\qquad\left|g\left(x\right)-g\left(x-r\omega\right)\right|^{2} \,dx\,d\omega\,dr\\
\leq &\int_{S^{d-1}}\int_0^1\int \frac{r^{d}}{\left(r+h\right)^{d+1}}\,|L_r(.,\omega)\star \nabla a|\,\left|g\left(x\right)-g\left(x-r\omega\right)\right|^{2}.\end{split}
\]
By a H\"older estimate
\[
\begin{split}
I\leq&\int_0^1\int_{S^{d-1}}\frac{1}{r+h}\,\left\|L_{r}(.,\omega)\star\nabla a\right\|_{L^p}\, \left\|g\left(.\right)-g\left(.-rw\right)\right\|_{L^{2\,p^*}}\,d\omega\,dr.
\end{split}
\]
Of course
\[
\begin{split}
&\left\|g\left(.\right)-g\left(.-rw\right)\right\|_{L^{2\,p^*}}\leq 2\,\left\|g\left(.\right)\right\|_{L^{2\,p^*}}.
\end{split}
\]
We recall here that $L$ satisfies the assumption of Lemma \ref{lem:convolution} uniformly in $\omega$. We hence deduce for some constant $C$
\[
I\leq C\,\left\Vert \nabla a\right\Vert _{B^0_{p,q}}\,\left|\log h\right|^{1-1/q}\left\Vert g\right\Vert _{L^{2\,p^*}}.
\]

\noindent \textbf{Control of J.} The general idea is similar to $I$: Trying to identify convolution with a kernel of vanishing average to gain regularity.

For this reason, we decompose again
\[
J=J_1+J_2,
\]
where we subtracted the right average in $J_1$
\[\begin{split}
J_1=   \int&\frac{\left|x-y\right|\,\chi}{\left(\left|x-y\right|+h\right)^{d+1}}\left(\frac{\left(x-y\right)}{\left|x-y\right|}\otimes\frac{\left(x-y\right)}{\left|x-y\right|}-\tilde{C}I_{d}\right)\\
&\qquad:\nabla a\left(x+\left|x-y\right|z\right)\frac{dz}{\left|z\right|^{d-1}}\, \left|g\left(x\right)-g\left(y\right)\right|^{2}\,dx\,dy,
\end{split}
\]
where $A:B$ denotes the total contraction of two matrices ${\displaystyle \sum_{i,j}A_{ij}B_{ij}}$ and where $\tilde C$ is again chosen s.t.
\[
\int_{S^{d-1}} (\omega_i^2-\tilde C)\,d\omega=0.
\]
This leaves as $J_2$
\[\begin{split}
J_2=\tilde{C}\int&\frac{\left|x-y\right|\,\chi}{\left(\left|x-y\right|+h\right)^{d+1}} \,I_{d}\,:\,\nabla a\left(x+\left|x-y\right|z\right)\frac{dz}{\left|z\right|^{d-1}}\\
&\quad\left|g\left(x\right)-g\left(y\right)\right|^{2}\,dx\,dy.
\end{split}
\]
This term can be immediately bounded as
\[
I_d:\nabla a=\mbox{div}\,a,
\]
so that
\begin{equation}\begin{split}
J_2&\leq C\,\|\mbox{div}\,a\|_{L^\infty}\int_{\Pi^{2d}} \frac{|g(x)-g(y)|^2}{(|x-y|+h)^d}\,dx\,dy\\
&\leq C\,\|\mbox{div}\,a\|_{L^\infty}\int_{\Pi^{2d}} (1+K_h(x-y))\,|g(x)-g(y)|^2\,dx\,dy.\label{boundJ2}
\end{split}
\end{equation}
We now turn to $J_1$ where we need to use a slight variant of spherical coordinates by writing  $x-y=-r\,w$ for $r\in \R$ and $1/4\leq w\leq 1$ instead of $w\in S^{d-1}$ with $|w|=1$ as usual. Indeed for a fixed $x\in \R^d$ use first spherical coordinates, $w=s\,\omega$ to calculate
\[\begin{split}
&\int_{0}^{R} \int_{1/2\leq|w|\leq R} \Phi(x+r\,w)\,r^{d-1}\,dw\,dr\\
&\quad=\int_{1/2}^{1} \int_0^R\int_{S^{d-1}} \Phi(x+s\,r\,\omega)\,(r\,s)^{d-1}\,dr\,d\omega\,ds\\
&\quad=\int_{1/2}^{1} \int_0^{R\,s}\int_{S^{d-1}} \Phi(x+\tilde r\,\omega)\,\tilde r^{d-1}\,d\tilde r\,d\omega\,\frac{ds}{s}=\int_{1/2}^{1} \int_{|x-y|\leq R\,s} \Phi(y)\,dy\,\frac{ds}{s}\\
&\quad=\int_{|x-y|\leq R} \Phi(y)\int_{\max(1/2,|x-y|/R)}^{1}\frac{ds}{s}\,dy,
\end{split}
\]
with the change of variables $r\rightarrow \tilde r=r\,s$ for a fixed $s$ and a final use of spherical coordinates $y=x+\tilde r\,\omega$. Therefore defining the smooth function $W_R(r)=\int_{\max(1/2,r/R)}^1 s^{-1}\,ds$ and for any $\tilde \Phi$ by taking $\Phi(y)=\tilde\Phi(y)/W_R(|x-y|)$
\begin{equation}
\int_{|x-y|\leq R} \tilde\Phi(y)\,dy=\int_{0}^{R} \int_{1/2\leq|w|\leq R} \tilde \Phi(x+r\,w)\,\frac{r^{d-1}}{W_R(r\,|w|)}dw\,dr.\label{sphericalmod}
\end{equation}
Denote accordingly
\[
\tilde L\left(w\right)=\left(\frac{w\otimes w}{\left|w\right|^{2}}-\tilde{C}I_{d}\right)\,\mathbb{I}_{1/2\leq w\leq 1}.
\]
This allows to rewrite
\[\begin{split}
J_{1}= &  \int_0^1\int\frac{r^d\,\chi(r)}{W_1(r\,|w|)\,\left(r+h\right)^{d+1}}\,\tilde L\left(w\right)\,:\, \int_{B(0,1)}\nabla a\left(x+r\,z\right)\frac{dz}{\left|z\right|^{d-1}}\\
&\qquad\left|g\left(x\right)-g\left(x+rw\right)\right|^{2}\,dx\,dw\,dr.
\end{split}
\]
Observe that $W_1(u)=\bar w$ is constant for $u<1/2$, that is since $|w|\leq 1$ that $W_1(r\,|w|)=\bar w$ if $r<1/2$. Obviously the integral $J_1$ is bounded for $r>1/2$ so
\[\begin{split}
J_{1}\leq &C+  \frac{1}{\bar w}\int_0^1\int\frac{r^d\,\chi(r)}{\left(r+h\right)^{d+1}}\,\tilde L\left(w\right)\,:\, \int_{B(0,1)}\nabla a\left(x+r\,z\right)\frac{dz}{\left|z\right|^{d-1}}\\
&\qquad\left|g\left(x\right)-g\left(x+rw\right)\right|^{2}\,dx\,dw\,dr.
\end{split}
\]
Now denote for simplicity
\[
A_r(x)=\int_{B(0,1)}\nabla a\left(x+r\,z\right)\frac{dz}{\left|z\right|^{d-1}},
\]
and let us expand $\left|g\left(x\right)-g\left(y\right)\right|^{2}=g^2(x)+g^2(y)+2g(x)\,g(y)$ so that 
\[
J_1\leq C+\frac{J_x+2\,J_{xy}+J_y}{\bar w},
\]
with
\[
\begin{split}
&J_x=\int_0^1\int\frac{r\,\chi(r)}{\left(r+h\right)^{d+1}}\,\tilde L\left(w\right)\, :\,A_r(x)\,g^2(x)\,dx\,dw\,dr,\\
&J_{xy}=\int_0^1\int\frac{r\,\chi(r)}{\left(r+h\right)^{d+1}}\,\tilde L\left(w\right)\, :\,A_r(x)\,g(x)\,g(x+rw)\,dx\,dw\,dr,\\
&J_x=\int_0^1\int\frac{r\,\chi(r)}{\left(r+h\right)^{d+1}}\,\tilde L\left(w\right)\, :\,A_r(x)\,g^2(x+rw)\,dx\,dw\,dr.\\
\end{split}
\]
First note that since $\int \tilde L(w)\,dw=0$, one simply has that
\[
J_x=0.
\]
The term $J_y$ can be controlled through Lemma \ref{lem:convolution} as one can identify a convolution
\[\begin{split}
J_y&=\int_{\R^{d}}\int_{0}^{1}\frac{r^{d}\,\chi(r)}{\left(r+h\right)^{d+1}}\,A_r(x)\,:\,\tilde L_{r}\star\left|g\right|^{2}\,dx\,dr\\
&=\int_{\R^{d}}\int_{0}^{1}\frac{r^{d}\,\chi(r)}{\left(r+h\right)^{d+1}}\,\tilde L_r \star A_r(x)\,\left|g\right|^{2}\,dx\,dr,
\end{split}\]
since $\tilde L$ is even. Now, by
H\"older inequality one has
that
\[\begin{split}
J_{y}&\leq \int_0^1 \frac{1}{r+h}\,\|\tilde L_r\star A_r\|_{L^{p}}\,\|g^2\|_{L^{p^*}}\,dr.\\
%&\leq C\,\left(\int_0^1\int_{\R^d}\frac{1}{r}\left|L_{r}\star\left|g\right|^{2}\right|^{p^*}\right)^{1/{p^*}}\\
%&\qquad\qquad\left(\int_0^1\int_{\R^d}\int_{B(0,1)}|\nabla a(x+rz)|^p\frac{dz}{\left|z\right|^{d-1}\,dx\,dr}\right)^{\nicefrac{1}{2}}.
\end{split}\]
Of course denoting $B(z)=\mathbb{I}_{|z|\leq 1}\,|z|^{1-d}$, one has by the definition of $A_r$, 
\[
\|\tilde L_r\star A_r\|_{L^p}=\|\tilde L_r\star(B_r\star \nabla a)\|_{L^p}=\|B_r\star(\tilde L_r\star \nabla a)\|_{L^p} \leq\left\|B_r\right\|_{L^1}\leq \|\tilde L_r\star\nabla a\|_{L^p}.
\]
Finally, by Lemma 3 applied to the exponent $p$, we deduce that 
\[
J_{y}\leq C\,\left\Vert g^{2}\left(t,.\right)\right\Vert _{L^{p^*}}\,\|\nabla a\|_{B^0_{p,q}}\,\left|\log h\right|^{1-1/q}.
\]
Again we identify the convolution in $J_{xy}$ 
\[
\begin{split}
J_{xy}= &  \int_{\Pi^{d}}\int_{0}^{1}\frac{r^d\,\chi(r)}{(r+h)^{d+1}}\,L_{r}\star g(x)\,:\,A_r(x)\,g\left(t,x\right)dxdr,
\end{split}
\]
and still by Lemma \ref{lem:convolution}
we get 
\[
\begin{array}{c}
J_{xy}\leq C\,\left\Vert g\left(t,.\right)\right\Vert_{L^{2p^*}}^2 \,\left|\textrm{log}\,h\right|^{1-1/q}\, \left\Vert\nabla a\right\Vert_{B^0_{p,q}}.\end{array}
\]
Collecting all estimates and recalling that for $p\leq 2$, $L^p\subset B^0_{p,2}$, concludes the proof of the proposition.
%
%%%%%%%%%%%%%%%%%%%%%%%%%%%%%%%%%%%%%%
\section{Proof of Proposition \ref{propembedding}  and Theorem \ref{thregularity}}
%%%%%%%%%%%%%%%%%%%%%%%%%%%%%%%%%%%%%%
%
\subsection{Proof of Prop. \ref{propembedding}}
%%%%%%%%%%%%%%%%%%%%%%%%%%%%%%%%%%%%%%%%%%%%%%%%
The embedding $W^p_{\log,\theta}\subset L^p$ is straightforward from the definition. For the embedding $W^{s,p}\subset W^p_{\log,\theta}$, note that if $u\in W^{s,p}$
\[
\|u(.+z)-u(.)\|_{L^p}\leq C\,|z|^{s}\,\|\nabla u\|_{L^p},
\]
which can easily be obtained by interpolation from the $s=1$ case
\[\begin{split}
\|u(.+z)-u(.)\|_{L^p}&=\left\|\int_0^1 z\nabla u(.+sz)ds\right\|_{L^p}\leq \int_0^1 |z|\,\left\|\nabla u(.+sz)\right\|_{L^p}\,ds\\
&=|z|\,\left\|\nabla u\right\|_{L^p}.
\end{split}
\]
On the other hand
\[\begin{split}
\|u\|_{p,\theta}^p=&\sup_h |\log h|^{-\theta}\int_{\R^d} K_h(z) \|u(.+z)-u(.)\|_{L^p}^p\,dz\\
&\leq \|\nabla u\|_{L^p}\,\sup_h |\log h|^{-\theta}\,\int_{\R^d} |z|^s\,K_h(z)\,dz,
\end{split}
\]
while finally
\[
\int_{\R^d} |z|^s\,K_h(z)\,dz\leq \int_{|z|\leq 2} \frac{|z|^s}{|z|^d}\,dz\leq C,
\]
concluding the bound.

Note that with similar calculations
\[
\int_{\R^d} K_h(z)\,dz\sim \int_{|z|\leq 2} \frac{1}{(h+|z|)^d}\,dz\sim |\log h|,
\]
which implies that
\[
\|u\|_{p,1}^p\leq 2\,\|u\|_{L^p}\,\sup_h |\log h|^{-1}\,\int_{\R^d} K_h(z)\,dz\leq C\,\|u\|_{L^p},
\]
so that the semi-norms do not carry any special information when $\theta=1$.

Define now
\[
\bar K_h=\frac{K_h}{\|K_h\|_{L^1}}\sim \frac{K_h}{\log h}.
\]
The kernel $\bar K_h$ is normalized and moreover for any $\delta>0$, as $h\rightarrow 0$,
\[
\int_{|z|\geq \delta} \bar K_h(z)\,dz\longrightarrow 0,
\]
such that $\bar K_h$ is a classical convolution kernel. Observe that
\[\begin{split}
\|u-\bar K_h\star u\|_{L^p}&\leq \|K_h\|_{L^1}^{-1}\,\int_{\R^{d}} K_h(z)\,\|u(.+z)-u(.)\|_{L^p}\,dz\\
&\leq \left( \|K_h\|_{L^1}^{-1}\,\int_{\R^{d}}K_h(z)\,\|u(.+z)-u(.)\|_{L^p}\,dz\right)^{1/p}\\
&\leq C\,|\log h|^{\theta-1}\,\|u\|_{p,\theta}.
\end{split}\]
This proves by Rellich criterion that for any sequence $u_n$ s.t. $\|u_n\|_{p,\theta}+\|u_n\|_{L^p}$ is uniformly bounded, $u_n$ is locally compact in $L^p$.

Finally let us calculate for $p=2$, using Fourier transform
\[
\int_{z\in\R^d} K_h(z)\,\|u(.+z)-u(.)\|_{L^2}^2\,dz=\int_{\R^d} K_h(z)\,\int_{\R^d} \left|e^{iz\cdot\xi}-1\right|^2\,|{\cal F}\,u(\xi)|^2\,d\xi\,dz.
\]
That leads to calculate
\[\begin{split}
\int_{\R^d} K_h(z)\,\left|e^{iz\cdot\xi}-1\right|^2\,dz&\sim \int_{|z\cdot \xi|\geq 1} K_h(z)+ \int_{|z\cdot \xi|\leq 1} K_h(z)\,|z\cdot \xi|^2\\
&\sim \left|\log \left(\frac{1}{|\xi|}+h\right)\right|+1,
\end{split}\]
concluding the proof.
%Therefore
%\[\begin{split}
%\int_{z\in\R^d} K_h(z)\,\|u(.+z)-u(.)\|_{L^2}^2\,dz \sim&\int_{|\xi|\leq h^{-1}} |\log |\xi||\,|{\cal F}\,u(\xi)|^2\,d\xi\\
%&+\int_{|\xi|\geq h^{-1}} |\log h|\,|{\cal F}\,u(\xi)|^2\,d\xi.\\
%\end{split}
%\]
%%%%%%%%%%%%%%%%%%%%%%%%%%%%%%%%%%%%%%%%%%%%%%%%%%%%%%%
\subsection{Proof of Theorem \ref{thregularity}}
%%%%%%%%%%%%%%%%%%%%%%%%%%%%%%%%%%%%%%%%%%%%%%%%%%%%%%%%
The proof of Th. \ref{thregularity} mostly follows the steps of \cite{BeJa}, the main improvement being the more precise Proposition \ref{commutatorestimate}. 

First of all by Kruzkov's doubling of variables, see \cite{Kru}, any entropy solution $u$ to \eqref{eq:principale} satisfies in the sense of distributions that
\begin{equation}\begin{split}
&\partial_t |u(t,x)-u(t,y)|+\mbox{div}_x (a(t,x)\,F(u(t,x),u(t,y))\\
&+\mbox{div}_y (a(t,y)\,F(u(t,x),u(t,y))+G(u(t,x),u(t,y))\,\mbox{div}_x a(t,x)\\
&+G(u(t,y),u(t,x))\,\mbox{div}_y a(t,y)\leq 0,
\end{split}\label{kruzkov}
\end{equation}
where 
\begin{equation}
\begin{split}
& F(\xi,\zeta)=(f(\xi)-f(\zeta))\,\mbox{sign}\,(\xi-\zeta)=F(\zeta,\xi),\\
&G(\xi,\zeta)=f(\xi)\,\mbox{sign}\,(\xi-\zeta)-F(\xi,\zeta)=\bar G(\xi,\zeta)\frac{1}{2}\,F(\xi,\zeta),\quad\mbox{with}\\
&\bar G(\xi,\zeta)=\frac{f(\xi)+f(\zeta)}{2}\,\mbox{sign}\,(\xi-\zeta)=-\bar G(\zeta,\xi).
\end{split}\label{defFG}
\end{equation}
Note that up to adding a constant in $f$, we may assume that $f(0)=0$ thus normalizing $\bar G$ s.t. $\bar G(0,0)=0$.
 
For any fixed $h$, $K_h(x-y)$ is a smooth, compactly supported function which we may hence use as a test function for \eqref{kruzkov}, giving
\[\begin{split}
&\frac{d}{dt}\int_{\R^{2d}} K_h(x-y)\,|u(t,x)-u(t,y)|\,dx\,dy\\
&\quad\leq\int_{\R^{2d}} \nabla K_h(x-y)\,(a(x)-a(y))\,F(u(x),u(y))\,dx\,dy\\
&\qquad+\int_{\R^{2d}} K_h(x-y)\,\frac{\mbox{div}\,a(x)+\mbox{div}\,a(y)}{2}\,F(u(x),u(y))\,dx\,dy\\
&\qquad+\int_{\R^{2d}} K_h(x-y)\,(\mbox{div}\,a(x)-\mbox{div}\,a(y))\,\bar G(u(x),u(y))\,dx\,dy.\\
\end{split}
\]
Using the bound on the divergence the second term in the r.h.s. can simply be bounded by
\[
\|f'\|_{L^\infty}\,\|\mbox{div}\,a\|_{L^\infty}\,\int_{\R^{2d}} K_h(x-y)\,|u(t,x)-u(t,y)|\,dx\,dy,
\]
since $F(\xi,\zeta)\leq \|f'\|_{L^\infty}\,|\xi-\zeta|$.
while by H\"older estimate the third term in the r.h.s. is bounded by
\[\begin{split}
&\left(\int_{\R^{2d}} K_h(x-y)\,(\mbox{div}\,a(x)-\mbox{div}\,a(y))^p\right)^{1/p}\\
&\qquad\times\left(\int_{\R^{2d}} K_h(x-y)\,|\bar G(u(x),u(y))|^{p^*}\right)^{1/p^*}\\
&\quad\leq \|f'\|_{L^\infty}\,|\log h|^{\theta}\,\|u\|_{L^{p^*}}\,\|\mbox{div}\,a\|_{p,p\,(\theta-1/p^*)},
\end{split}\]
simply by using that $\bar G(\xi,\zeta)\leq \|f'\|_{L^\infty}\,\frac{\xi+\zeta}{2}$ since $f(0)=0$. 

The combination of those two bounds yields that
\begin{equation}
\frac{d}{dt} \|u\|_{1,\theta}\leq \|f'\|_{L^\infty}\,\|\mbox{div}\,a\|_{L^\infty}\,\|u\|_{1,\theta}+\|f'\|_{L^\infty}\, \|u\|_{L^{p^*}}\,\|\mbox{div}\,a\|_{p,p\,(\theta-1/p^*)}+{\cal C},
\end{equation}
where ${\cal C}$ is the commutator
\[
{\cal C}=\sup_{h} |\log h|^{-\theta}\,\int_{\R^{2d}} \nabla K_h(x-y)\,(a(x)-a(y))\,F(u(x),u(y))\,dx\,dy.
\]
Therefore all the difficulty lies in obtaining a explicit quantitative estimate on this commutator. This is where Prop. \ref{commutatorestimate} comes in, leading to improved, more precise results with respect to \cite{BeJa}.

We still need an additional step to put ${\cal C}$ in precisely the right form for  Prop. \ref{commutatorestimate}. As this is going to be used as well for the numerical scheme, we put the corresponding estimate in a lemma
\begin{lem}
Assume that $f\in W^{1,\infty}(\R_+)$, that $u\in L^{p^*,1}$ and that $a\in B^1_{p,q}$ then provided $\theta\geq 1-1/q$
\[\begin{split}
{\cal C}&=\sup_{h} |\log h|^{-\theta}\,\int_{\R^{2d}} \nabla K_h(x-y)\,(a(x)-a(y))\,F(u(x),u(y))\,dx\,dy\\
&\leq C\,\|\mbox{div}\,a\|_{L^\infty}\,\|f'\|_{L^\infty}\, \|u\|_{1,\theta}+C\,\|\nabla a\|_{B^0_{p,q}}\,\|f'\|_{L^\infty}\,\|u\|_{L^{p^*,1}}.
\end{split}
\] \label{lemcommutator}
\end{lem}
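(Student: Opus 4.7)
The plan is to reduce $\mathcal{C}$ to the quadratic expression handled by Proposition~\ref{commutatorestimate} via a layer-cake decomposition of $F$. Using the elementary identity
\[
F(u(x),u(y)) = \mathrm{sign}(u(x)-u(y))\int_{\min(u(x),u(y))}^{\max(u(x),u(y))} f'(t)\,dt = \int_{\R} f'(t)\,|h_t(x)-h_t(y)|^{2}\,dt,
\]
with $h_t(x):=\mathbb{I}_{\{u(x)>t\}}$, I insert this expansion into $\mathcal{C}$, apply Fubini, and bound the absolute value of the resulting integral by pulling $|f'(t)|\leq\|f'\|_{L^{\infty}}$ outside the inner $(x,y)$ integral. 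The identity above uses the fact that the $\{0,1\}$-valued quantity $|h_t(x)-h_t(y)|^{2}$ equals $\mathrm{sign}(u(x)-u(y))\cdot(h_t(x)-h_t(y))$. The remaining inner integral is exactly of the form controlled by Proposition~\ref{commutatorestimate} applied to $g=h_t$; importantly, each of the pieces $I$, $J_{xy}$, $J_y$, $J_2$ in its proof is controlled through H\"older-type or sign-independent bounds, so the estimate in fact applies to the absolute value of the commutator.

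Applying Proposition~\ref{commutatorestimate} for each $t$ and integrating in $t$ produces two contributions. The Besov term carries the weight $\|h_t\|_{L^{2p^*}}^{2}=|\{u>t\}|^{1/p^*}$; after bounding $|f'|$ pointwise one is left with
\[
\int_{\R} |\{u>t\}|^{1/p^*}\,dt \;\sim\; \|u\|_{L^{p^*,1}},
\]
the standard distribution-function representation of the Lorentz quasi-norm. This yields a bound of the form $C\,\|\nabla a\|_{B^{0}_{p,q}}\,|\log h|^{1-1/q}\,\|f'\|_{L^{\infty}}\,\|u\|_{L^{p^*,1}}$. The divergence term takes the form
\[
\int_{\R} |f'(t)| \int K_h(x-y)\,|h_t(x)-h_t(y)|^{2}\,dx\,dy\,dt;
\]
swapping integrals, the inner $t$-integral is bounded by $\int_{\min}^{\max}|f'(t)|\,dt\leq \|f'\|_{L^{\infty}}|u(x)-u(y)|$, so the contribution is controlled by $\|f'\|_{L^{\infty}}\int K_h(x-y)|u(x)-u(y)|\,dx\,dy\leq \|f'\|_{L^{\infty}}|\log h|^{\theta}\|u\|_{1,\theta}$ by the very definition of the semi-norm $\|\cdot\|_{1,\theta}$.

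Combining these two contributions and dividing by $|\log h|^{\theta}$, the divergence piece gives precisely $C\,\|\mathrm{div}\,a\|_{L^{\infty}}\,\|f'\|_{L^{\infty}}\,\|u\|_{1,\theta}$, while the Besov piece picks up an additional factor $|\log h|^{1-1/q-\theta}$ which stays uniformly bounded on $h\leq 1/2$ precisely when $\theta\geq 1-1/q$, yielding the second term. The main obstacle is the bookkeeping of signs: both the commutator integral and $f'(t)$ change sign, so one must consistently use Proposition~\ref{commutatorestimate} as an absolute-value estimate and absorb $\mathrm{sign}\,f'$ into $|f'|$ when extracting it from the outer integral. For non-negative $u$ the effective range of $t$ is $[0,\infty)$ and the identification with the Lorentz quasi-norm is immediate; for signed $u$ one reduces to $|u|$, which has the same Lorentz norm, by the usual splitting $u=u_{+}-u_{-}$.
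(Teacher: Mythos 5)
Your proof is correct and follows essentially the same route as the paper: the layer-cake identity $F(u(x),u(y))=\int f'(t)\,|h_t(x)-h_t(y)|^2\,dt$ is exactly the paper's representation via the repartition function $\kappa(t,x,\xi)=\mathbb{I}_{0\leq\xi\leq u(t,x)}$, after which Proposition~\ref{commutatorestimate} is applied to each level set and the $t$-integral of $|\{u>t\}|^{1/p^*}$ is identified with the Lorentz norm $\|u\|_{L^{p^*,1}}$. Your extra care with signs and with signed $u$ is sound but does not change the argument in substance.
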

\begin{proof}
Just as in \cite{BeJa}, we use the repartition function of $u$
\[
\kappa(t,x,\xi)=\mathbb{I}_{0\leq \xi\leq u(t,x)},
\]
which from the definition of $F$ in \eqref{defFG} implies the simple representation
\[
F(u(x),u(y))=\int_0^\infty f'(\xi)\,|\kappa(x,\xi)-\kappa(y,\xi)|^2\,d\xi.
\]
This lets us simply rewrite
\[\begin{split}
&{\cal C}=\sup_{h} |\log h|^{-\theta} \int_0^\infty f'(\xi)\\
&\qquad\int_{\R^{2d}} \nabla K_h(x-y)\,(a(x)-a(y))\,|\kappa(t,x,\xi)-\kappa(t,y,\xi)|^2\,dx\,dy\,d\xi.
\end{split}
\]
We may now directly use Prop. \ref{commutatorestimate} to find that provided $\theta\geq 1-1/q$
\[
{\cal C}\leq C\,\|\mbox{div}\,a\|_{L^\infty}\,\|f'\|_{L^\infty}\, \|u\|_{1,\theta}+C\,\|\nabla a\|_{B^0_{p,q}}\,\int_0^\infty f'(\xi)\,\|\kappa(t,.,\xi)\|_{L^{2p^*}}^2\,d\xi.
\]
It is now straightforward to estimate
\[\begin{split}
\int_0^\infty f'(\xi)\,\|\kappa(t,.,\xi)\|_{L^{2p^*}}^2\,d\xi&\leq \|f'\|_{L^\infty}\,\int_0^\infty |\{ u(t,.)\geq \xi\}|^{1/p^*}\,d\,\xi\\
&=\|f'\|_{L^\infty}\,\|u\|_{L^{p^*,1}},
\end{split}\]
where $L^{p^*,1}$ denotes the corresponding Lorentz space. 
\end{proof}

From the previous Lemma we finally obtain that if $\theta\geq \max(1/p^*,1-1/q)$
\[\begin{split}
\frac{d}{dt} \|u\|_{1,\theta}\leq &C\,\|f'\|_{L^\infty}\,\|\mbox{div}\,a\|_{L^\infty}\,\|u\|_{1,\theta}+\|f'\|_{L^\infty}\, \|u\|_{L^{p^*}}\,\|\mbox{div}\,a\|_{p,p\,(\theta-1/p^*)}\\
&+C\,\|\nabla a\|_{B^0_{p,q}}\,\|f'\|_{L^\infty}\,\|u\|_{L^{p^*,1}}.
\end{split}
\]
A Gronwall estimate concludes the first statement of Th. \ref{thregularity}. The embeddings $L^p\subset B^0_{p,2}$ for $p\leq 2$ and $L^r\cap L^1\subset L^{p^*,1}$ for $r>p^*$ conclude the second statement.
%%%%%%%%%%%%%%%%%%%%%%%%%%%%%%%%%%%%%%%%%%%%%%%%%%%%%%%%%%%%%%%%%
%%%%%%%%%%%%%%%%%%%%%%%%%%%%%%%%%%%%%%%%%%%%%%%%%%%%%%%%%%%%%%%%
\section{The Numerical Scheme: Proof of Theorem \ref{convergencenumerics}}
%%%%%%%%%%%%%%%%%%%%%%%%%%%%%%%%%%%%%%%%%%%%%%%%%%%%%%%%%%%%%%%%
%%%%%%%%%%%%%%%%%%%%%%%%%%%%%%%%%%%%%%%%%%%%%%%%%%%%%%%%%%%%%%%%
We have to calculate, denoting $s_{i,j}^{n+1}=\mbox{sign}(u_i^{n+1}-u_j^{n+1})$
\[\begin{split}
&\sum_{i,j\in \Z^d} |u_i^{n+1}-u_j^{n+1}|\,K^h_{i-j}=\sum_{i,j}s_{i,j}^{n+1}\,K^h_{i-j}\,\sum_{m\in \Z^d}\left( b_{i,m}(a^n_m,\;u_m^n)-b_{j,m}(a^n_m,\;u_m^n)\right)\\
&\quad=\sum_{i,j}s_{i,j}^{n+1}\,K^h_{i-j}\,\sum_{m\in \Z^d}\left( b_{i,m}(a^n_m,\;u_m^n)-b_{i,m}(a_m^n,u_j^n)+\frac{1}{2}(u_j^n-u_i^n)\,\delta_{m-i}\right.\\
&\hspace{100pt} \left. +b_{j,m}(a_m^n,u_{i}^n)-b_{j,m}(a^n_m,\;u_m^n)+\frac{1}{2}(u_j^n-u_i^n)\,\delta_{m-j}  \right)\\
&\quad\ +\sum_{i,j,m}s_{i,j}^{n+1}\,K^h_{i-j} \left(b_{i,m}(a_m^n,u_{j}^n)-b_{j,m}(a_m^n,u_{i}^n)-u_j^n+u_i^n\right).
\end{split}\]
Use the discretized expression of the divergence of $a$ given by Eq. \eqref{divcondition} to find that
\[
\sum_m b_{i,m}(a_j^n,u_j^n)=u_j^n+\delta t\,D_j^n\,\tilde f(u_j^n),\quad \sum_m b_{j,m}(a_j^n,u_i^n)=u_i^n+\delta t\,D_i^n\,\tilde f(u_i^n).
\]
We also recall that the $b_i$ is increasing in $u$ s.t. $b_{i,m}(a^n_m,\;u_m^n)-b_{i,m}(a_m^n,u_j^n)$ has the sign of $u_m^n-u_j^n$. In particular
\[
s_{ij}^{n+1}\,(b_{i,m}(a^n_m,\;u_m^n)-b_{i,m}(a_m^n,u_j^n))\leq s_{mj}^{n}\,(b_{i,m}(a^n_m,\;u_m^n)-b_{i,m}(a_m^n,u_j^n)).
\]
Similarly since $b_{i,j}(a,u)-u/2$ is increasing
\[\begin{split}
&s_{ij}^{n+1}\,\left(b_{i,i}(a^n_i,\;u_i^n)-b_{i,m}(a_i^n,u_j^n)+\frac{1}{2}(u_j^n-u_i^n)\right)\\
&\qquad\leq s_{ij}^{n}\,\left(b_{i,i}(a^n_i,\;u_i^n)-b_{i,m}(a_i^n,u_j^n)+\frac{1}{2}(u_j^n-u_i^n)\right). 
\end{split}\]
We hence have
\[\begin{split}
&\sum_{i,j\in \Z^d} |u_i^{n+1}-u_j^{n+1}|\,K^h_{i-j}\\
&\quad\leq \sum_{i,j,m}s_{m,j}^{n}\,K^h_{i-j}\,\left( b_{i,m}(a^n_m,\;u_m^n)-b_{i,m}(a_m^n,u_j^n)+\frac{1}{2}(u_j^n-u_i^n)\,\delta_{m-i}\right)\\
&\qquad+\sum_{i,j,m}s_{i,m}^{n}\,K^h_{i-j}\,\left( b_{j,m}(a_m^n,u_{i}^n)-b_{j,m}(a^n_m,\;u_m^n)+\frac{1}{2}(u_j^n-u_i^n)\,\delta_{m-j}  \right)\\
&\quad\ +\delta t\,\sum_{i,j}s_{i,j}^{n+1}\,K^h_{i-j} \left(D_j^n\,\tilde f(u_j^n)-D_i^n\,\tilde f(u_i^n)\right).
\end{split}
\]
Using now the conservative form of the scheme as given by \eqref{conservative}, we may write that
\begin{equation}
\begin{split}
&\sum_{i,j\in \Z^d} |u_i^{n+1}-u_j^{n+1}|\,K^h_{i-j}\leq \sum_{i,j}\,K^h_{i-j}\, |u_i^n-u_j^n|\\
&\qquad+\frac{\delta t}{\delta x}\,\sum_{i,j,m }\sum_{k=1}^d s_{m,j}^{n}\,K^h_{i-j}\,\left( F^k_{i+[1]_k-m}(a^n_m,\;u_m^n)-F^k_{i+[1]_k-m}(a^n_m,\;u_j^n)\right.\\ &\hspace{100pt}\left. -F^k_{i-m}(a^n_m,\;u_m^n)+F^k_{i-m}(a^n_m,\;u_j^n)\right)\\
&\qquad+\frac{\delta t}{\delta x}\,\sum_{i,j,m }\sum_{k=1}^d s_{i,m}^{n}\,K^h_{i-j}\,\left(F^k_{j+[1]_k-m}(a^n_m,\;u_i^n) -F^k_{j+[1]_k-m}(a^n_m,\;u_m^n)\right.\\
&\hspace{100pt} \left. -F^k_{j-m}(a^n_m,\;u_i^n)+F^k_{j-m}(a^n_m,\;u_m^n)
 \right)\\
&\quad\ +\delta t\,\sum_{i,j}s_{i,j}^{n+1}\,K^h_{i-j} \left(D_j^n\,\tilde f(u_j^n)-D_i^n\,\tilde f(u_i^n)\right),
\end{split}\label{kruzkovdiscrete}
\end{equation}
which is the equivalent of Eq. \eqref{kruzkov}.

Denote
\[
{\cal D}_h=\delta t\,\sum_{i,j}s_{i,j}^{n+1}\,K^h_{i-j} \left(D_j^n\,\tilde f(u_j^n)-D_i^n\,\tilde f(u_i^n)\right).
\]
Following the proof of Theorem \ref{thregularity}, this term can be simply handled
\[
{\cal D}_h\leq \delta t \,\sum_{i,j} K^h_{i-j} |D_i^n-D_j^n|\,\tilde f(u_i^n)+\|D_i^n\|_{l^\infty}\,\delta t \,\sum_{i,j} K^h_{i-j}\,|\tilde f(u_i^n)-\tilde f(u_j^n)|.
\]
By simple Lipschitz bounds and discrete H\"older estimates, one obtains that
\[\begin{split}
{\cal D}_h\leq \delta t\,\|\tilde f\|_{L^{\infty}}\, \delta x^{-2d}&\left(\|u\|_{l^{p^*}}\,\left(\delta x^{2d}\,\sum_{i,j} K^h_{i-j} |D_i^n-D_j^n|^p\right)^{1/p}\right.\\
&+\left.\|D_i^n\|_{l^\infty}\,\delta x^{2d}\,\sum_{i,j} K^h_{i-j}\,|u_i^n-u_j^n|\right).
\end{split}
\]
Since the Lipschitz norm of $\tilde f$ is bounded by the norm of $f$ and by the definition of our discrete semi-norms, we hence obtain the exact equivalent of the continuous case namely for any $h\geq \delta x^{\alpha}$
\begin{equation}
\delta x^{2d}\,|\log h|^{-\theta}\,{\cal D}_h\leq \delta t\,\|f\|_{L^{\infty}}\, \left(\|u\|_{l^{p^*}}\,\|D^n\|_{\alpha,p, p\,(\theta -1/p^*)}+\|D_i^n\|_{l^\infty}\,\|u^n\|_{\alpha,p,\theta}\right).\label{boundDterm}
\end{equation}
We now perform a discrete integration by parts in the other terms of Eq. \eqref{kruzkovdiscrete} with for example
\[\begin{split}
&\sum_{i,j,m }\sum_{k=1}^d s_{m,j}^{n}\,K^h_{i-j}\,\left( F^k_{i+[1]_k-m}(a^n_m,\;u_m^n)-F^k_{i+[1]_k-m}(a^n_m,\;u_j^n)\right.\\ &\hspace{100pt}\left. -F^k_{i-m}(a^n_m,\;u_m^n)+F^k_{i-m}(a^n_m,\;u_j^n)\right)\\
&\ =\sum_{i,j,m }\sum_{k=1}^d s_{m,j}^{n}\,(K^h_{i-[1]_k-j}-K^h_{i-j})\,\left(F^k_{i-m}(a^n_m,\;u_m^n) -F^k_{i-m}(a^n_m,\;u_j^n) \right)
\end{split}
\]
This leads to
\[
\begin{split}
&\sum_{i,j\in \Z^d} |u_i^{n+1}-u_j^{n+1}|\,K^h_{i-j}\leq \sum_{i,j}\,K^h_{i-j}\, |u_i^n-u_j^n|+{\cal D}_h\\
&\quad+\frac{\delta t}{\delta x}\sum_{i,j,m }\sum_{k=1}^d s_{m,j}^{n}\,(K^h_{i-[1]_k-j}-K^h_{i-j})\,\left(F^k_{i-m}(a^n_m,\;u_m^n) -F^k_{i-m}(a^n_m,\;u_j^n) \right)\\
&\quad+\frac{\delta t}{\delta x}\sum_{i,j,m} \sum_{k=1}^d s_{i,m}^{n}\,(K^h_{i+[1]_k-j}-K^h_{i-j})\,\left(F^k_{j-m}(a^n_m,\;u_i^n) -F^k_{j-m}(a^n_m,\;u_m^n) \right).\\
\end{split}
\]
Let us swap $i$ and $m$ in the first sum and $j$ and $m$ in the second to find
\[
\begin{split}
&\sum_{i,j\in \Z^d} |u_i^{n+1}-u_j^{n+1}|\,K^h_{i-j}\leq \sum_{i,j}\,K^h_{i-j}\, |u_i^n-u_j^n|+{\cal D}_h\\
&\quad+\frac{\delta t}{\delta x}\sum_{i,j,m }\sum_{k=1}^d s_{i,j}^{n}\,(K^h_{m-[1]_k-j}-K^h_{m-j})\,\left(F^k_{m-i}(a^n_i,\;u_i^n) -F^k_{m-i}(a^n_i,\;u_j^n) \right)\\
&\quad+\frac{\delta t}{\delta x}\sum_{i,j,m} \sum_{k=1}^d s_{i,j}^{n}\,(K^h_{i+[1]_k-m}-K^h_{i-m})\,\left(F^k_{m-j}(a^n_j,\;u_i^n) -F^k_{m-j}(a^n_j,\;u_j^n) \right).\\
\end{split}
\]
The moments condition \eqref{momentsflux} on the flux ensures that $F^k_{m-i}$ is small unless $m$ is close to $i$. This will allow us to replace $K^h_{m-[1]_k-j}-K^h_{m-j}$ by $K^h_{i-[1]_k-j}-K^h_{i-j}$ (and similarly for the second sum). More precisely, we write
\[
\begin{split}
& \sum_{i,j,m }\sum_{k=1}^d s_{i,j}^{n}\,(K^h_{m-[1]_k-j}-K^h_{m-j})\,\left(F^k_{m-i}(a^n_i,\;u_i^n) -F^k_{m-i}(a^n_i,\;u_j^n) \right)\\
&\quad=\sum_{i,j,m }\sum_{k=1}^d s_{i,j}^{n}\,(K^h_{i-[1]_k-j}-K^h_{i-j})\,\left(F^k_{m-i}(a^n_i,\;u_i^n) -F^k_{m-i}(a^n_i,\;u_j^n) \right)\\
&\qquad+\sum_{i,j,m }\sum_{k=1}^d s_{i,j}^{n}\,(K^h_{m-[1]_k-j}-K^h_{i-[1]_k-j}-K^h_{m-j}+K^h_{i-j})\\
&\hspace{120pt}\left(F^k_{m-i}(a^n_i,\;u_i^n) -F^k_{m-i}(a^n_i,\;u_j^n) \right).
\end{split}
\]
We recall that
\[
K^h_{i-j}=\frac{\phi(\delta x\,(i-j))}{(\delta x\,|i-j|+h)^d},
\]
so that for a given $0<\gamma\leq 1$
\[\begin{split}
&|K^h_{i-j}-K^h_{m-j}-K^h_{i-[1]_k-j}+K^h_{m-[1]_k-j}|\\
&\qquad\leq \frac{C\,\delta x^{1+\gamma}\,|i-m|^\gamma}{(\delta x\,|i-j|+h)^{d+1+\gamma}}+\frac{C\,\delta x^{1+\gamma}\,|i-m|^\gamma}{(\delta x\,|m-j|+h)^{d+1+\gamma}}.
\end{split}\]
We recall that, because of compact support, we only have to calculate the expression for $|j-i|+|j-m|\leq C\,\delta x^{-1}$. Therefore
\[\begin{split}
&\sum_{m,i\ s.t.|j-i|+|j-m|\leq C\,\delta x^{-1}} |K^h_{i-j}-K^h_{m-j}-K^h_{i-[1]_k-j}+K^h_{m-[1]_k-j}|\,|F^k_{m-i}(a^n_i,\;u_i^n)|\\
&\quad\leq \sum_{i,\ |j-i|\leq C\,\delta x^{-1}} \frac{C\,\delta x^{1+\gamma}}{(\delta x\,|i-j|+h)^{d+1+\gamma}}\, \sum_{m} |i-m|^\gamma\,|F^k_{m-i}(a^n_i,\;u_i^n)|\\
&\qquad+ \sum_{m,\ |j-m|\leq C\,\delta x^{-1}} \frac{C\,\delta x^{1+\gamma}}{(\delta x\,|m-j|+h)^{d+1+\gamma}}\, \sum_{i} |i-m|^\gamma\,|F^k_{m-i}(a^n_i,\;u_i^n)|,\\
\end{split}
\]
giving by \eqref{momentsflux}
\[\begin{split}
&\sum_{m,i\ s.t.|j-i|+|j-m|\leq C\,\delta x^{-1}} |K^h_{i-j}-K^h_{m-j}-K^h_{i-[1]_k-j}+K^h_{m-[1]_k-j}|\,|F^k_{m-i}(a^n_i,\;u_i^n)|\\
&\quad\leq \sum_{i,\ |i-j|\leq C\,\delta x^{-1}} \frac{C\,\delta x^{1+\gamma}}{(\delta x\,|i-j|+h)^{d+1+\gamma}}\,\|f'\|_{L^\infty}\, |a_i^n|\,|u_i^n|\\
&\qquad+\sum_{m,\ |j-m|\leq C\,\delta x^{-1}} \frac{C\,\delta x^{1+\gamma}}{(\delta x\,|m-j|+h)^{d+1+\gamma}}\,\|f'\|_{L^\infty}\,\|a^n\|_{l^p}\,\|u^n\|_{l^{p^*}}.\\
\end{split}
\]
Remark that
\[
\sum_{i\neq 0} \frac{1}{(\delta x\,|i|+h)^{d+1+\gamma}}\leq \frac{C}{h^{1+\gamma}\,\delta x^d}. 
\]
This allows to conclude that 
\[\begin{split}
&\sum_{j,m,i,\ |i-j|+|m-j|\leq C\,\delta x^{-1}} |K^h_{i-j}-K^h_{m-j}-K^h_{i-[1]_k-j}+K^h_{m-[1]_k-j}|\,|F^k_{m-i}(a^n_i,\;u_i^n)|\\
&\qquad\leq C\, \delta x^{-2d}\,\frac{\delta x^{1+\gamma}}{h^{1+\gamma}}\,\|f'\|_{L^\infty}\,\|a^n\|_{l^p}\,\|u^n\|_{l^{p^*}}.
\end{split}
\]
The other terms are handled in the same manner so that
\[
\begin{split}
&\sum_{i,j} |u_i^{n+1}-u_j^{n+1}|\,K^h_{i-j}\\
&\quad\leq \sum_{i,j}\,K^h_{i-j}\, |u_i^n-u_j^n|+{\cal D}_h+C\,\delta t \frac{\delta x^{\gamma-2d}}{h^{1+\gamma}}\,\|f'\|_{L^\infty}\, \|a^n\|_{l^p}\,\|u^n\|_{l^{p^*}}\\
&\quad+\frac{\delta t}{\delta x}\sum_{i,j,m }\sum_{k=1}^d s_{i,j}^{n}\,(K^h_{i-[1]_k-j}-K^h_{i-j})\,\left(F^k_{m-i}(a^n_i,\;u_i^n) -F^k_{m-i}(a^n_i,\;u_j^n) \right)\\
&\quad+\frac{\delta t}{\delta x}\sum_{i,j,m} \sum_{k=1}^d s_{i,j}^{n}\,(K^h_{i+[1]_k-j}-K^h_{i-j})\,\left(F^k_{m-j}(a^n_j,\;u_i^n) -F^k_{m-j}(a^n_j,\;u_j^n) \right).\\
\end{split}
\]
The normalization of the flux, Eq. \eqref{normflux} for instance implies that 
\[
\sum_m F^k_{m-i}(a^n_i,\;u_i^n)=a^{n}_{i,k}\,f(u_i^n),
\] 
where we recall that $a^{n}_{i,k}$ is the $k$ coordinate of the vector $a^n_i$. 
This leads to
\[
\begin{split}
&\sum_{i,j} |u_i^{n+1}-u_j^{n+1}|\,K^h_{i-j}\\
&\quad\leq \sum_{i,j}\,K^h_{i-j}\, |u_i^n-u_j^n|+{\cal D}_h+C\,\delta t \frac{\delta x^{\gamma-2d}}{h^{1+\gamma}}\,\|f'\|_{L^\infty}\, \|a^n\|_{l^p}\,\|u^n\|_{l^{p^*}}\\
&\quad+\frac{\delta t}{\delta x}\sum_{i,j,m }\sum_{k=1}^d s_{i,j}^{n}\,(K^h_{i-[1]_k-j}-K^h_{i-j})\,(a^n_{i,k}-a^n_{j,k})\,(f(u_i^n)- f(u_j^n)).\\
&\quad+\frac{\delta t}{\delta x}\sum_{i,j,m }\sum_{k=1}^d(K^h_{i+[1]_k-j}-2\,K^h_{i-j}+K^h_{i-[1]_k-j})\,s_{i,j}^n\,a^n_{j,k}\,(f(u_i^n)- f(u_j^n)).\end{split}
\]
Observe that provided $h\geq \delta x$
\[
\left|K^h_{i+[1]_k-j}-2\,K^h_{i-j}+K^h_{i-[1]_k-j}\right|\leq \frac{C\,\delta x^2}{(\delta x\,|i-j|+h)^{d+2}}.
\]
Therefore with calculations similar to the previous one, it is possible to show that
\[\begin{split}
&\sum_{i,j,m }\sum_{k=1}^d(K^h_{i+[1]_k-j}-2\,K^h_{i-j}+K^h_{i-[1]_k-j})\,s_{i,j}^n\,a^n_{j,k}\,(f(u_i^n)- f(u_j^n))\\
&\qquad\leq C\,\|f'\|_{L^\infty}\,\|a^n\|_{l^p}\,\|u^n\|_{l^{p^*}}\,\frac{\delta x^2}{h^2}.
\end{split}\]
Therefore since $\gamma\leq 1$, we finally obtain that
\begin{equation}
\begin{split}
&\sum_{i,j} |u_i^{n+1}-u_j^{n+1}|\,K^h_{i-j}\\
&\quad\leq \sum_{i,j}\,K^h_{i-j}\, |u_i^n-u_j^n|+{\cal D}_h+C\,\delta t \frac{\delta x^{\gamma-2d}}{h^{1+\gamma}}\,\|f'\|_{L^\infty}\,\|a^n\|_{l^p}\,\|u^n\|_{l^{p^*}}\\
&\quad+\frac{\delta t}{\delta x}\sum_{i,j,m }\sum_{k=1}^d s_{i,j}^{n}\,(K^h_{i-[1]_k-j}-K^h_{i-j})\,(a^n_{i,k}-a^n_{j,k})\,(f(u_i^n)- f(u_j^n)).\\
\end{split}\label{equivcontinuous}
\end{equation}
We may now bound the last term by using the continuous result of Lemma \ref{lemcommutator}. For this construct continuous fields from the discrete ones.

Consider a set of cubes $C_i$ of size $\delta x$ s.t. $C_i$ is centered at point $x_i$. 

We first define the field $\tilde u^n(x)$ which is piecewise constant with $\tilde u^n(x)=u^n_i$ within $C_i$. We then construct a velocity field $\tilde a^n(x)$ piecewise linear in each cube $C_i$ and such that for any $i,\;j$
\[
\frac{1}{\delta x^{2d}}\,\int_{C_i\times C_j} \nabla K_h(x-y)\cdot (\tilde a^n(x)-\tilde a^n(y))= \sum_{k=1}^d \frac{K^h_{i-[1]_k-j}-K^h_{i-j}}{\delta x}\,(a^n_{i,k}-a^n_{j,k}).
\]
As a consequence, the corresponding norms of $a^n$ and $u^n$ are dominated by the discrete norms
\begin{equation}
\|\tilde u^n\|_{L^q(\R^d)}\leq \|u^n\|_{l^q},\quad \|\tilde a^n\|_{W^{1,p}(\R^d)}\leq \|a^n\|_{d,W^{1,p}},\qquad \|\div \tilde a^n\|_{L^\infty}\leq \sup_i |D^n_i|.\label{continuousdiscretenorms} 
\end{equation}
Furthermore we recall that by Eq. \eqref{defFG}
\[
F(\xi,\zeta)=(f(\xi)-f(\zeta))\,\mbox{sign}(\xi-\zeta),
\]
so that one has the identity
\[\begin{split}
&\delta x^{-1}\,\sum_{i,j,m }\sum_{k=1}^d s_{i,j}^{n}\,(K^h_{i-[1]_k-j}-K^h_{i-j})\,(a^n_{i,k}-a^n_{j,k})\,(f(u_i^n)- f(u_j^n))\\
&\qquad=\delta x^{-2d} \int_{\R^{2d}} \nabla K_h(x-y)\cdot (\tilde a^n(x)-\tilde a^n(y))\,F(u^n(x),u^n(y)).
\end{split}
\]
We may apply Lemma \ref{lemcommutator} to find that provided $\theta\geq 1-1/p$
\[\begin{split}
&\delta x^{-1}\,\sum_{i,j,m }\sum_{k=1}^d s_{i,j}^{n}\,(K^h_{i-[1]_k-j}-K^h_{i-j})\,(a^n_{i,k}-a^n_{j,k})\,(f(u_i^n)- f(u_j^n))\\
&\qquad\leq C\,\delta x^{-2d}\,|\log h|^{\theta}\,\|f'\|_{L^\infty}\,\left(\|\mbox{div}\,\tilde a\|_{L^\infty}\,\|u\|_{1,\theta}+\|\nabla \tilde a^n\|_{B^0_{p,p}}\,\|\tilde u^n\|_{L^{p^*,1}}\right).
\end{split}
\]
Using classical embeddings and the bounds from Eq. \eqref{continuousdiscretenorms}, we estimate that for any $\theta\geq 1-1/p$ and any $q>p^*$
\[\begin{split}
&\delta x^{-1}\,\sum_{i,j,m }\sum_{k=1}^d s_{i,j}^{n}\,(K^h_{i-[1]_k-j}-K^h_{i-j})\,(a^n_{i,k}-a^n_{j,k})\,(f(u_i^n)- f(u_j^n))\\
&\qquad\leq C\,\delta x^{-2d}\,|\log h|^{\theta}\,\|f'\|_{L^\infty}\,\left(\sup_i |D_i^n|\,\|u^n\|_{d,1,\theta}+\|a^n\|_{d,W^{1,p}}\,\|u^n\|_{l^{q}}\right).
\end{split}
\]
If we combine this bound together with the bound \eqref{boundDterm} on ${\cal D}_h$ into \eqref{equivcontinuous}
\begin{equation}
\begin{split}
& \|u^{n+1}\|_{\alpha,1,\theta}=\sup_{h\geq \delta x^\alpha} |\log h|^{-\theta}\,\delta x^{2d}\,\sum_{i,j} K^h_{i-j}\,|u_i^{n+1}-u_j^{n+1}|\\
&\quad\leq \|u^{n}\|_{\alpha,1,\theta}+C\,\delta t\,\frac{\delta x^{\gamma}}{h^{1+\gamma}}\,\|f'\|_{L^\infty}\,\|a^n\|_{l^p}\,\|u^n\|_{l^{p^*}}\\
&\qquad+\delta t\,\|f'\|_{L^{\infty}}\,\left(\|u^n\|_{l^{p^*}}\,\|D^n\|_{\alpha,p,p(\theta-1/p^*)} +\|D^n\|_{l^\infty}\,\|u^n\|_{d,1,\theta}\right)\\
&\qquad+\delta t\,\|f'\|_{L^\infty}\,\left( \|D^n\|_{l^\infty}\,\|u^n\|_{\alpha,1,\theta}+\|a^n\|_{d,W^{1,p}}\,\|u^n\|_{l^{q}}\right).
\end{split}\label{finalgronwall}
\end{equation}
A discrete Gronwall estimate allows to conclude the proof.
%%%%%%%%%%%%%%%%%%%%%%%%%%%%%%%%%%%%%%%%%%%%%%%%%%%%%%%%%%%%%%%%%%%%%%%%%
%%%%%%%%%%%%%%%%%%%%%%%%%%%%%%%%%%%%%%%%%%%%%%%%%%%%%%%%%%%%%%%%%%%%%%%%%
\section*{Appendix A: Proof of Lemma \ref{lem:convolution}.\label{appendix}}
%%%%%%%%%%%%%%%%%%%%%%%%%%%%%%%%%%%%%%%%%%%%%%%%%%%%%%%%%%%%%%%%%%%%%%%%%
%%%%%%%%%%%%%%%%%%%%%%%%%%%%%%%%%%%%%%%%%%%%%%%%%%%%%%%%%%%%%%%%%%%%%%%%%
The estimates presented here are classical and we refer for instance to \cite{Ab}, \cite{BaChDa} or \cite{Stein2}.

   Choose any family $\Psi_k\in {\cal S}(\R^d)$ s.t.
\begin{itemize}
\item For $k\geq 1$, its Fourier transform $\hat \Psi_k$ is positive and compactly supported in the annulus $\{2^{k-1}\leq |\xi|\leq 2^{k+1}\}$.
\item It leads to a decomposition of the identity in the sense that there exists $\Psi_0$ with $\hat \Psi_0$ compactly supported in $\{|\xi|\leq 2\}$ s.t. for any $\xi$
\[
1=\sum_{k\geq 0} \hat \Psi_k(\xi).
\]
\item The family is localized in $\R^d$ in the sense that for all $s>0$
\[
\sup_k \|\Psi_k\|_{L^1}<\infty,\quad \sup_k 2^{ks}\,\int_{\R^d} |z|^s\,|\Psi_k(z)|\,dz<\infty.
\] 
\end{itemize}
Such a family can be used to define the usual Besov norms with
\begin{equation}
\|u\|_{B^{s}_{p,q}}=\left\| 2^{s\,k}\,\|\Psi_k\star u\|_{L^p_x}\right\|_{l^q_k}=\left(\sum_{k=0}^\infty 2^{s\,k\,q}\,\|\Psi_k\star u\|_{L^p_x}^q\right)^{1/q}<\infty.\label{defbesov}
\end{equation}
For this reason it is useful to denote
\[
U_{k}=\Psi_{k}*u.
\]
Since $U_k$ is localized in frequency, one may easily relate all its Sobolev norms: For any $1<p<\infty$, any $k\geq 1$ and any $\alpha$
\begin{equation}
\|U_k\|_{W^{\alpha,p}}\leq C_p\,2^{k\alpha}\,\|U_k\|_{L^p}.\label{soblp}
\end{equation}

\bigskip

We first give the bound that we use for $k\leq |\log_2 r|$. Since the kernel $L$ has 0 average 
\[
L_{r}\star U_{k}=\int_{\mathbb{R}^{d}}L_{r}\left(x-y\right)\left(U_{k}\left(y\right)-U_{k}\left(x\right)\right)dy.
\]
Therefore 
\begin{eqnarray*}
\left\Vert L_{r}\star U_k\right\Vert _{L^{p}}\leq\int_{\mathbb{R}^{d}}L_{r}\left(z\right)\left\Vert U_{k}\left(.\right)-U_{k}\left(.+z\right)\right\Vert _{L^{p}}dz\\
\leq\int_{\mathbb{R}^{d}}L_{r}\left(z\right)\left|z\right|^{s}\left\Vert U_{k}\right\Vert _{W^{s,p}}dz.
\end{eqnarray*}
Since $L$ has bounded moments then $\int |z|^s L_r(z)\,dz=r^s\,\int |z|^s\,L(z)\,dz$,
yielding 
\begin{equation}
\left\Vert L_{r}\star U_k\right\Vert _{L^{p}}\leq Cr^{s}2^{ks}\left\Vert U_{k}\right\Vert _{L^{p}}\label{eq:conv2-1}
\end{equation}
by inequality \eqref{soblp} for $\alpha=s$, and for a fixed constant $C$ depending only on $\int\left|z\right|^{s}L\left(z\right)dz.$

\smallskip

For the case $k\geq |\log_2 r|$, we use that $L\in W^{s,1}$ and deduce that
\begin{equation}
\left\Vert L_{r}\star U_k\right\Vert _{L^{p}}\leq\left\Vert L_{r}\right\Vert _{W^{s,1}}\left\Vert U_{k}\right\Vert _{W^{-s,p}}\leq Cr^{-s}2^{-ks}\left\Vert U_{k}\right\Vert _{L^{p}},\label{eq:conv3}
\end{equation}
by using again \eqref{soblp} but for $\alpha=-s$, where $C$ only depends on the $W^{s,1}$ norm of $L$. 

Using now this decomposition and the two bounds, \eqref{eq:conv2-1}-\eqref{eq:conv3}
\[
\begin{split}
\int_{h_{0}}^{1}\left\Vert L_{r}\star u\right\Vert _{L^{p}}\frac{dr}{r}&=\sum_{k=0}^{\infty}\int_{h_{0}}^{1}\left\Vert L_{r}\star U_{k}\right\Vert _{L^{p}}\frac{dr}{r}\\
\leq &C\,\sum_{k=0}^{\infty}\left\Vert U_{k}\right\Vert _{L^{p}}\left(\int_{h_{0}}^{2^{-k}}r^{s}2^{ks}\frac{dr}{r}
+\int_{\max\left(h_{0},2^{-k}\right)}r^{-s}2^{-ks}\frac{dr}{r}\right).
\end{split}
\]
This implies
that 
\begin{eqnarray}
\int_{h_{0}}^{1}\left\Vert L_{r}*u\right\Vert _{L^{p}}\frac{dr}{r}\leq C\sum_{k\leq\left|\log_{2}h_{0}\right|}^{\infty}\left\Vert U_{k}\right\Vert _{L^{p}}\label{eq:conv4}
+C\sum_{k>\left|\log_{2}h_{0}\right|}^{\infty}\frac{2^{-ks}}{h_{0}^{s}}\left\Vert U_{k}\right\Vert _{L^{p}}.\nonumber 
\end{eqnarray}
Now simply bound
\[
\begin{split}
\sum_{k\leq\left|\log_{2}h_{0}\right|}^{\infty}\left\Vert U_{k}\right\Vert _{L^{p}}+\sum_{k>\left|\log_{2}h_{0}\right|}^{\infty}\frac{2^{-ks}}{h_{0}^s}\left\Vert U_{k}\right\Vert _{L^{p}}&\leq C\sum_{0}^{\infty}\left\Vert U_{k}\right\Vert _{L^{p}}\\
&=\,C\left\Vert u\right\Vert _{B_{p,1}^{0},}
\end{split}
\]
which gives (\ref{eq:conv1}) in the case $q=1$.

Next remark that
\begin{eqnarray*}
\sum_{k>\left|\log_{2}h_{0}\right|}^{\infty}\frac{2^{-ks}}{h_{0}^s}\left\Vert U_{k}\right\Vert _{L^{p}}\leq\sup\left\Vert U_{k}\right\Vert _{L^{p}}\sum_{k>\left|\log_{2}h_{0}\right|}^{\infty}\frac{2^{-k\,s}}{h_{0}^s}\leq\sup\left\Vert U_{k}\right\Vert _{L^{p}}\\
\leq C\left\Vert u\right\Vert _{B_{p,\infty}^{0}}.
\end{eqnarray*}
On the other hand,
\[
\sum_{k\leq\left|\log_{2}h_{0}\right|}^{\infty}\left\Vert U_{k}\right\Vert _{L^{p}}\leq \left|\log_{2}h_{0}\right|^{1-1/q}\,\left(\sum_k \|U_k\|_{L^p}^q\right)^{1/q}\leq \left|\log_{2}h_{0}\right|^{1-1/q}\,\|u\|_{B^0_{p,q}},
\]
implying \eqref{eq:conv1} for general $q$.

We now recall the well-known embedding of $L^p$ into $B^0_{p,2}$ when $p\leq 2$, giving
\[
\sum_{k\leq\left|\log_{2}h_{0}\right|}^{\infty}\left\Vert U_{k}\right\Vert _{L^{p}}\leq C\sqrt{\left|\log_{2}h_{0}\right|}\left\Vert u\right\Vert _{L^{p}}.
\]
Therefore (\ref{eq:conv4}) yields 
\[
\int_{h_{0}}^{1}\left\Vert L_{r}*u\right\Vert _{L^{p}}\frac{dr}{r}\leq C\sqrt{\left|\log_{2}h_{0}\right|}\left\Vert u\right\Vert _{L^{p}}+C\left\Vert u\right\Vert _{B_{p,\infty}^{0}},
\]
which proves (\ref{eq:conv2}).
%%%%%%%%%%%%%%%%%%%%%%%%%%%%%%%%%%%%%%%%%%%%%%%%%%%%%%%%%%%%%
%%%%%%%%%%%%%%%%%%%%%%%%%%%%%%%%%%%%%%%%%%%%%%%%%%%%%%%%%%%%%

\end{document}